\newcommand{\redsout}{\bgroup\markoverwith{\textcolor{red}{\rule[0.5ex]{2pt}{.4pt}}}\ULon}
\newcommand{\LC}{\left(}
\newcommand{\RC}{\right)}
\newcommand{\p}{\partial}
\newcommand{\rd}{\mathrm{d}}
\newcommand{\diam}{\text{diam}}
\numberwithin{equation}{section}
\newtheorem{theorem}{Theorem}[section]
\newtheorem{corollary}[theorem]{Corollary}
\newtheorem{lemma}[theorem]{Lemma}
\newtheorem{remark}{Remark}[section]
\newcommand{\R}{\mathbb R}
\author[Klingenberg]{Christian Klingenberg}
\address{Department of Mathematics, W\"{u}rzburg University, W\"{u}rzburg 97074, Germany}
\curraddr{}
\email{klingen@mathematik.uni-wuerzburg.de}
\author[Lai]{Ru-Yu Lai}
\address{School of Mathematics, University of Minnesota, Minneapolis, MN 55455, USA}
\curraddr{}
\email{rylai@umn.edu}
\author[Li]{Qin Li}
\address{Department of Mathematics, University of Wisconsin-Madison, Madison, WI 53705, USA}
\curraddr{}
\email{qinli@math.wisc.edu}
\thanks{\noindent\textbf{Key words.} Inverse problem, nonlinear radiative transfer equation, blackbody emission, uniqueness, and stability. 
	\thanks{\textbf{AMS subject classifications.} 35R30}}
\title[Nonlinear RTE]{Reconstruction of the emission coefficient in the nonlinear radiative transfer equation}
\date{}
\begin{document}
\maketitle

\begin{abstract}
In this paper, we investigate an inverse problem for the radiative transfer equation that is coupled with a heat equation in a nonscattering medium in $\R^n,\ n\geq 2$. The two equations are coupled through a nonlinear blackbody emission term that is proportional to the fourth power of the temperature. 
By measuring the radiation intensity on the surface of the blackbody, we prove that the emission property of the system can be uniquely reconstructed. In particular, we design a reconstruction procedure that uses merely one set of experiment setup to fully recover the emission parameter.
\end{abstract}

\section{Introduction}
\subsection{Motivation}
Radiative transfer is the physical phenomenon of energy transfer in the form of electromagnetic radiation. The classical model equation for such phenomena is termed the radiative transfer equation (RTE), that encodes absorption, emission, and scattering processes along the radiation. The equation is widely used in optical imaging~\cite{Arridge_1999}, atmospheric science~\cite{petty2006first}, and remote sensing~\cite{review_RS_RTE}.

We study the problem for the radiative transfer equation, when it is combined with a blackbody heat conductance. In particular, the blackbody radiation is coupled with the classical radiative transfer equation through the ``source" term (or the emission term in the RTE). According to the classical theory for the total blackbody emission power, this source term mainly depends on the blackbody temperature, which further satisfies the classical heat equation \cite{minkowycz2000advances,modest2013radiative}. The resulting coupled system reads as follows:
\begin{align}\label{RTE_nonlinear_model}
\begin{cases}
    \partial_t u + \theta\cdot \nabla_x u = -\mu u + \int_{\mathbb{S}^{n-1}} \Phi(\theta',\theta) u(t,x,\theta')\,d{\theta'} + u_b \,, &\\ 
    \partial_t T = \Delta_x T  - u_b + \mu{1\over |\mathbb{S}^{n-1}|}\int_{\mathbb{S}^{n-1}}  u(t,x,\theta)\,d{\theta}\,,& 
\end{cases}
\end{align} 
where $u\equiv u(t,x,\theta)$ describes the radiation intensity at time $t$ on the phase space $(x,\theta)$, with the position $x\in\mathbb{R}^n$ and the direction $\theta\in\mathbb{S}^{n-1}$, and $T$ is the temperature. Here $\mathbb{S}^{n-1}$ is the unit sphere in $\R^n$ and $n\geq 2$.

The first equation of \eqref{RTE_nonlinear_model} is the classical radiative transfer equation with $\mu\equiv \mu(x)$ being the absorption coefficient, and $\int \Phi(\theta',\theta)u(t,x,\theta')\rd{\theta'}$ reflecting the fact that some photons moving in direction $\theta'$ get scattered into $\theta$ direction according to the kernel $\Phi(\theta',\theta)$.
$u_b$ is typically regarded as the source term that introduces new energy into the system. For this particular case, it represents the blackbody emission and is the term that is used to couple $u$ and $T$. The temperature $T$ satisfies the heat equation with a heat sink $u_b$, and a heat source that comes from absorbing the photons. According to the classical theory from total blackbody emission power argument, the blackbody emission is proportional to the fourth power of the temperature:
\[
u_b = \sigma  T^4\,,\\
\]
where $\sigma$, the emission coefficient, is $\sigma_0 \kappa$, with $\sigma_0$ is known as the Stefan-Boltzmann constant, and $\kappa$ characterizes the emissivity of the medium. This parameter is typically unknown for different materials, and it is 	the parameter that we would like to reconstruct by taking measurement on the boundary. For more details, we refer the interested reader to the book \cite{modest2013radiative}, for example, Chapter 10.

\subsection{The setup and main results}\label{sec:setup}
As a start, we confine ourselves to a simpler situation where the scattering is turned off, meaning $\Phi = 0$. We also only look at the steady state solution with $t$ dependence eliminated. The absorption coefficient $\mu$ is assumed to be known, and we target at reconstructing the emission coefficient $\sigma$. 

Let $\Omega$ be an open bounded, connected and strictly convex domain in $\R^n$, $n\geq 2$ with smooth boundary $\partial\Omega$. 
The coupled system now is presented as the following:
\begin{align}\label{nonlinearRTE}
\begin{cases}
    \theta\cdot \nabla_x u + \mu u= \sigma T^4&\hbox{in } \Omega\times\mathbb{S}^{n-1}\,,\\
    \Delta_x T -\sigma  T^4 = -\mu\langle  u\rangle&\hbox{in }\Omega\,,\\
    u =u_B &\hbox{on }\Gamma_-\,,\\
    T =T_B &\hbox{on }\p\Omega\,,
    \end{cases} 
\end{align}
where the notation
\[
\langle  u\rangle(x):={1\over |\mathbb{S}^{n-1}|}\int_{\mathbb{S}^{n-1}} u(x,\theta )\,d\theta 
\]
is the normalized radiation intensity at location $x\in\Omega$.   The sets
\[
\Gamma_\pm := \{(x,\theta)\in \partial\Omega\times\mathbb{S}^{n-1}:\, \pm \theta\cdot n(x)>0\}\,,
\]
collect the boundary coordinates that are pointing out/in of the domain, where the notation $n(x)$ is the unit outer normal vector at $x\in\partial\Omega$, on the boundary. The boundary condition $u_B$ prescribes how many photons are injected at the boundary into the domain.

The forward problem for \eqref{nonlinearRTE} amounts to solving $u$ and $T$ when boundary condition $u_B$ and $T_B$ are given, assuming $\sigma$ and $\mu$ are known. In the inverse problem, it is to reconstruct $\sigma$ from the outgoing measurement $u|_{\Gamma_+}$ for the boundary condition $(u_B,T_B)$.

To make the statement rigorous, we first need to have a mathematical setup in which the coupled system \eqref{nonlinearRTE} makes sense. Throughout the paper we assume $\sigma$ and $\mu$ are compactly supported positive functions in $C^\gamma(\overline\Omega)$, with $0<\gamma<1$. Moreover, there exist positive constants $\sigma_m<\sigma_M<\infty$, and $\mu_m<\mu_M<\infty$ so that the following bounds hold:
\begin{align}\label{sigma}
    0<\sigma_m:=\min_{\overline\Omega} \sigma,\qquad  \|\sigma\|_{C^\gamma(\overline\Omega)}\leq \sigma_M\,,  
\end{align}
and
\begin{align}\label{mu}
    0<\mu_m:=\min_{\overline\Omega} \mu,\qquad \|\mu\|_{C^\gamma(\overline\Omega)}\leq \mu_M\,.
\end{align}

We will show, in Section~\ref{sec:well-posedness}, that under these conditions \eqref{sigma} and \eqref{mu}, the problem \eqref{nonlinearRTE} is well-posed for suitable chosen boundary conditions. More specifically, 
there exists properly defined sets $\mathcal{X}_1\subset C^\gamma(\Gamma_-)$ and $\mathcal{X}_2\subset C^{2,\gamma}(\p\Omega)$ so that when $(u_B,T_B)\in \mathcal{X}=\mathcal{X}_1\otimes\mathcal{X}_2$, 
the problem \eqref{nonlinearRTE} has a unique and positive solution $(u, T)\in  C^\gamma(\overline\Omega \times \mathbb{S}^{n-1}) \times C^{2,\gamma}(\overline\Omega)$. The unique existence of the solution is stated in Theorem~\ref{thm:well_posedness} and its positivity property is presented in Theorem~\ref{thm:positivity}. In particular, the well-posedness of the problem allows us to define the boundary map from $(u_B,T_B)\in  C^\gamma(\Gamma_-)\times C^{2, \gamma}(\p\Omega)$ to $u|_{\Gamma_+}\in C^\gamma(\Gamma_+)$, meaning:
\begin{equation}\label{eqn:def_A}
\begin{aligned}
\mathcal{A}_\sigma:\quad & \mathcal{X}\subset C^\gamma(\Gamma_-)\times C^{2, \gamma}(\p\Omega)&\to\quad&C^\gamma(\Gamma_+)\\
&(u|_{\Gamma_-},T|_{\partial\Omega})&\mapsto\quad &u|_{\Gamma_+}\,.
\end{aligned}
\end{equation}
This places us at the right footing for reconstructing $\sigma$.

We now present the unique identification of $\sigma$ from the map $\mathcal{A}_\sigma$. It is stated in the following theorem.

\begin{theorem}[Uniqueness]\label{main theorem}
Let $\Omega$ be an open bounded, connected and strictly convex domain in $\mathbb{R}^n$, $n\geq 2$ with smooth boundary, and let $\mathcal{A}_{\sigma_j}$ be the boundary operator defined in~\eqref{eqn:def_A} where the system~\eqref{nonlinearRTE} is equipped with the media $\sigma = \sigma_j$ for $j=1,2$. Suppose that $\sigma_j,\, \mu\in C^\gamma(\overline\Omega)$ with $0<\gamma<1$, are compactly supported and satisfy \eqref{sigma}-\eqref{mu}. Given any fixed data $(u_B,T_B)\in \mathcal{X}$, if $\mathcal{A}_{\sigma_1} (u_B, T_B)=\mathcal{A}_{\sigma_2} (u_B, T_B)$, then $\sigma_1=\sigma_2$ pointwisely in $\Omega$.
\end{theorem}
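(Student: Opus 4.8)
The plan is to exploit that, although the coupled system~\eqref{nonlinearRTE} is nonlinear, its first equation is \emph{linear} in $u$ once the isotropic source $f_j:=\sigma_j T_j^4$ is treated as known. Solving the transport equation $\theta\cdot\nabla_x u+\mu u=f_j$ with inflow condition $u|_{\Gamma_-}=u_B$ along characteristics gives, for every $(y,\theta)\in\Gamma_+$,
\begin{equation*}
u_j(y,\theta)=e^{-\int_0^{\tau}\mu(y-t\theta)\,dt}\,u_B(y-\tau\theta,\theta)+\int_0^{\tau}e^{-\int_0^{t}\mu(y-s\theta)\,ds}\,f_j(y-t\theta)\,dt,
\end{equation*}
where $\tau=\tau(y,\theta)>0$ is the length of the chord of $\Omega$ through $y$ in direction $\theta$. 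The first term depends only on the prescribed data $u_B$ and the known coefficient $\mu$, hence is identical for $j=1,2$. Therefore the hypothesis $\mathcal{A}_{\sigma_1}(u_B,T_B)=\mathcal{A}_{\sigma_2}(u_B,T_B)$, that is $u_1|_{\Gamma_+}=u_2|_{\Gamma_+}$, forces $f_1$ and $f_2$ to have the same attenuated ray transform (with the strictly positive $C^\gamma$ weight determined by $\mu$) along every line meeting $\Omega$.

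The second step is to invoke injectivity of this attenuated ray transform with known attenuation: for $n\ge 3$ the transform over all lines through $\Omega$ is formally overdetermined and injective by standard arguments, while for $n=2$ injectivity is Novikov's theorem. Since $\sigma_j$ is compactly supported in $\Omega$ and $T_j$ is bounded, $f_j$ is compactly supported in $\Omega$, so the injectivity applies and yields $f_1=f_2=:f$ pointwise in $\Omega$; equivalently, $\sigma_1 T_1^4=\sigma_2 T_2^4$.

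With $f$ identified, I would close the argument by feeding it back into the system. Both $u_1$ and $u_2$ solve the \emph{same} linear transport problem $\theta\cdot\nabla_x u+\mu u=f$ in $\Omega\times\mathbb{S}^{n-1}$ with $u|_{\Gamma_-}=u_B$, so by uniqueness for the transport equation $u_1=u_2$, and in particular $\langle u_1\rangle=\langle u_2\rangle$. Consequently $T_1$ and $T_2$ both solve the same linear elliptic Dirichlet problem $\Delta_x T=f-\mu\langle u_1\rangle$ in $\Omega$ with $T|_{\partial\Omega}=T_B$, so uniqueness for the Poisson equation gives $T_1=T_2=:T$. Finally, by the positivity in Theorem~\ref{thm:positivity} we have $T>0$ in $\Omega$, whence $\sigma_1=f/T^4=\sigma_2$ pointwise in $\Omega$, which is the claim.

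The only genuine difficulty I anticipate is the second step: one has to make sure that injectivity of the weighted (attenuated) ray transform is available at the regularity $\mu\in C^\gamma(\overline\Omega)$ assumed here and for the strictly convex geometry at hand, and to make explicit that a \emph{single} boundary illumination $u_B$ already exposes the full ray transform of $f$. The remaining ingredients — the characteristic representation of $u$, uniqueness for the transport equation, and uniqueness for the linear elliptic Dirichlet problem — are standard and are in any case already supplied by the well-posedness analysis of Section~\ref{sec:well-posedness}.
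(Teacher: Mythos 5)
Your proposal is correct and follows essentially the same route as the paper: the paper's Lemma~\ref{lemma:1st} carries out your first two steps (characteristic representation on $\Gamma_+$, extension of the vanishing attenuated transform to all lines, Novikov's inversion in 2D with the slicing reduction for $n\ge 3$, then uniqueness for the transport equation), Lemma~\ref{lemma:2nd} is your elliptic uniqueness step, and the final division by $T^4>0$ uses Theorem~\ref{thm:positivity} exactly as you describe. The regularity concern you flag is handled in the paper simply by extending $\mu$ and $\sigma_j$ by zero to $\R^n$ (keeping $C^\gamma$), which places the problem squarely within the hypotheses of Theorem~\ref{formula}.
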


This unique reconstruction result holds true for any dimension with $n\geq 2$. The proof of Theorem~\ref{main theorem} in Section~\ref{sec:uniqueness} relies on the fact that the attenuated $X$-ray transformation $P_\mu f$ can uniquely determine $f$ in $\R^n$ so long as $n\geq 2$. This is to say, suppose the attenuation coefficient $\mu$ is known, then by measuring the integration on a line, the source term $f$ can be reconstructed. We note that the result is rather strong since a single measurement $\mathcal{A}_{\sigma}(u_B,T_B)$ corresponding a fixed data $(u_B,T_B)$ needs to be known. In some sense, only one experiment is sufficient to fully reconstruct the media $\sigma$.
 
Besides the unique reconstruction, we also have a stability estimate.
\begin{theorem}[Stability estimate]\label{thm:stability}
Let $\Omega$ be an open bounded, connected and strictly convex domain in $\mathbb{R}^2$ with smooth boundary, and let $\mathcal{A}_{\sigma_j}$ be the boundary operator defined in~\eqref{eqn:def_A} where the system \eqref{nonlinearRTE} is equipped with the media $\sigma = \sigma_j$ for $j=1,2$. Suppose that $\sigma_j\,,\mu\in C^\gamma(\overline\Omega)$, $0<\gamma<1$, are compactly supported and satisfy \eqref{sigma}-\eqref{mu}. Given any fixed data $(u_B,T_B)\in \mathcal{X}$, then we have the following stability estimate:
\begin{align*}
    \|\sigma_1-\sigma_2\|_{C(\overline\Omega)} 
	&\leq C\|P_\mu^*(\mathcal{A}_{\sigma_1} (u_B,T_B)-\mathcal{A}_{\sigma_2} (u_B,T_B)) \|_{C^\gamma(\overline\Omega)}\,,
\end{align*}	
where the operator $P^*_\mu$ is defined in \eqref{def_Pmu} and the constant $C$ depends on $\Omega,\sigma_M,\mu,\alpha_j,\delta_j$ for $j=1.2$. 
\end{theorem}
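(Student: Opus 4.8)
The plan is to carry out the uniqueness argument behind Theorem~\ref{main theorem} quantitatively. Write $\Lambda:=\mathcal{A}_{\sigma_1}(u_B,T_B)-\mathcal{A}_{\sigma_2}(u_B,T_B)$ and let $(u_j,T_j)$ solve \eqref{nonlinearRTE} with $\sigma=\sigma_j$ and the common data $(u_B,T_B)$; by the well-posedness and positivity results (Theorems~\ref{thm:well_posedness} and \ref{thm:positivity}) these are positive and H\"older regular, with a priori bounds $0<\alpha_j\le T_j\le\delta_j$ on $\overline\Omega$ controlled by $\sigma_M,\mu_M$ and the data. First I would integrate the transport equation of \eqref{nonlinearRTE} along characteristics to get $u_j|_{\Gamma_+}=\mathcal{E}u_B+P_\mu(\sigma_jT_j^4)$, where $\mathcal{E}$ is the inflow-to-outflow operator (the same for both, since $u_B$, $\mu$ and $\Omega$ are the same for $j=1,2$) and $P_\mu$ is the attenuated $X$-ray transform with attenuation $\mu$. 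Subtracting, the $u_B$-contribution cancels, so $\Lambda=P_\mu(\sigma_1T_1^4-\sigma_2T_2^4)$. Since $\Omega\subset\mathbb{R}^2$, the attenuated $X$-ray transform is invertible by an explicit formula (Novikov), realized by the reconstruction operator $P_\mu^*$ of \eqref{def_Pmu}, which satisfies $P_\mu^*P_\mu=\mathrm{Id}$ on compactly supported H\"older functions; applying $P_\mu^*$ gives
\[
S:=\sigma_1T_1^4-\sigma_2T_2^4=P_\mu^*\Lambda\qquad\text{in }\Omega,
\]
so the right-hand side of the asserted estimate is exactly $\|S\|_{C^\gamma(\overline\Omega)}$.

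Next I would propagate this bound through the coupled system. Setting $\delta u:=u_1-u_2$ and $\delta T:=T_1-T_2$ and subtracting the transport equations, $\delta u$ solves $\theta\cdot\nabla_x\delta u+\mu\,\delta u=S$ with $\delta u|_{\Gamma_-}=0$; the characteristic representation, with attenuation factor $\le1$ and travel length $\le\diam(\Omega)$, gives $\|\langle\delta u\rangle\|_{C(\overline\Omega)}\le\diam(\Omega)\|S\|_{C(\overline\Omega)}$. Subtracting the heat equations, $\delta T$ solves $\Delta_x\delta T=S-\mu\langle\delta u\rangle$ in $\Omega$ with $\delta T|_{\partial\Omega}=0$, so the maximum principle (or the Schauder estimates) gives $\|\delta T\|_{C(\overline\Omega)}\le C\|S\|_{C(\overline\Omega)}$. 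Finally I factor
\[
S=\sigma_1T_1^4-\sigma_2T_2^4=T_2^4(\sigma_1-\sigma_2)+\sigma_1(T_1+T_2)(T_1^2+T_2^2)\,\delta T,
\]
and divide by $T_2^4\ge\alpha_2^4>0$ --- this is where the positivity of $T_2$ from Theorem~\ref{thm:positivity} is essential --- to obtain $\sigma_1-\sigma_2=T_2^{-4}\big(S-\sigma_1(T_1+T_2)(T_1^2+T_2^2)\,\delta T\big)$. Taking $C(\overline\Omega)$ norms and using $\|T_j\|_{C(\overline\Omega)}\le\delta_j$, $\|\sigma_1\|_{C(\overline\Omega)}\le\sigma_M$ and the bound on $\delta T$ then yields
\[
\|\sigma_1-\sigma_2\|_{C(\overline\Omega)}\le C\|S\|_{C(\overline\Omega)}\le C\|P_\mu^*\Lambda\|_{C^\gamma(\overline\Omega)},
\]
with $C$ depending only on the quantities listed in the statement.

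The only genuinely non-elementary ingredient is the inversion of the attenuated $X$-ray transform in the first step: this is what confines the estimate to $n=2$ and what the operator $P_\mu^*$ encodes. Everything else is bookkeeping; the structural point that makes it work is that the source difference $\sigma_1T_1^4-\sigma_2T_2^4$ enters the $\delta u$-transport equation and the $\delta T$-heat equation as the same forcing term, so once it is reconstructed from the data every downstream quantity is controlled linearly by it through the forward estimates of Section~\ref{sec:well-posedness}, and the positive lower bound on $T_2$ from Theorem~\ref{thm:positivity} is precisely what legitimizes the final division by $T_2^4$. If one prefers to bound $\|\delta T\|_{C(\overline\Omega)}$ via Schauder estimates rather than the maximum principle, the only extra point to check is that the transport solution operator maps $C^\gamma(\overline\Omega)$ into $C^\gamma(\overline\Omega\times\mathbb{S}^{n-1})$, which follows from the regularity of the travel-time functions on the strictly convex domain $\Omega$ established in Section~\ref{sec:well-posedness}.
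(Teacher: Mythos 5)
Your proposal is correct and follows essentially the same route as the paper: recover $S=\sigma_1T_1^4-\sigma_2T_2^4=P_\mu^*\Lambda$ via the Novikov inversion, control $T_1-T_2$ by $S$ through the elliptic (and transport) estimates, and divide by $T_2^4\ge\alpha_2^4$ using the positivity theorem. Your factorization $S=T_2^4(\sigma_1-\sigma_2)+\sigma_1(T_1^4-T_2^4)$ is algebraically identical to the paper's splitting $\sigma_1-\sigma_2=(T_1^{-4}-T_2^{-4})\sigma_1T_1^4+T_2^{-4}P_\mu^*\Lambda$, so the difference is purely notational.
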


As the proof of Theorem~\ref{main theorem}, the proof for obtaining the stability also heavily relies on the two-dimensional reconstruction formula for the attenuated $X$-ray transform in \cite{Novikov}. We can only derive the stability estimate for $n=2$ in Theorem~\ref{thm:stability} due to this dimensional restriction.

The structure of the paper is organized as follows. We will devote Section~\ref{sec:well-posedness} to addressing the well-posedness for the forward problem when boundary conditions are properly chosen. It gives the definition of $\mathcal{A}_\sigma$, as in~\eqref{eqn:def_A} the right footing. We further prove that the temperature $T$ is strictly positive in the whole domain and this positivity turns out to be a crucial factor in the reconstruction procedure. In Section~\ref{sec:uniqueness} we present the recipe, divided into a few substeps, to reconstruct the media $\sigma$ and show the validity of each substep. The stability of the reconstruction is finally shown in Section~\ref{sec:stable}.  	

\section{The forward problem}\label{sec:well-posedness}
In this section, we show that for sufficiently small boundary conditions, the boundary value problem \eqref{nonlinearRTE} is well-posed, and thus $\mathcal{A}_\sigma$ is a well-defined operator. This lays the basic ground for the further exploration of the inverse problem.

Let $\Omega\subset \R^n$, $n\geq 2$ be a bounded open set with smooth boundary, then the H\"older space $C^{k,\gamma}(\overline\Omega),\ 0<\gamma<1$ is the collection of functions so that:
\begin{align*}
\|u\|_{C^{k,\gamma}(\overline\Omega)} := \sum_{|\beta|=k} \sup_{x,y\in\Omega,x\neq y} {|\p^\beta u(x)-\p^\beta u(y)|\over |x-y|^\gamma} +\|u\|_{C^k(\overline\Omega)}<\infty\,.
\end{align*}
Here $k\geq0$ is an integer. When $k=0$, we abbreviate $C^{0,\gamma}(\overline\Omega) = C^{\gamma}(\overline\Omega)$.

Before presenting the theorem about the well-posedness of \eqref{nonlinearRTE}, we first discuss the geometry setup. Since the domain $\Omega$ considered in this paper is connected and strictly convex with $C^\infty$ boundary. For such a domain, there exists a $C^\infty$-function $\xi: \mathbb{R}^{n-1}\to \mathbb{R}$ such that $\Omega$ and its boundary can be described by
\begin{equation}\label{eqn:character}
\Omega =\{x: \xi(x)< 0\} \quad\text{and}\quad\partial\Omega = \{x: \xi(x)=0\}\,.
\end{equation}
Moreover, the strictly convexity here means that there exists a constant $D_0>0$ such that 
\begin{equation}\label{eqn:convex}
  \sum_{ij=1}^n\partial_{ij}\xi(x)a_ia_j \geq D_0|a|^2 
\end{equation}
for all $x$ such that $\xi(x)\leq 0$ and all $a=(a_1,\ldots,a_n)\in \R^n$. This gives $\nabla_x\xi(x)\neq 0$ for any $x\in\partial\Omega$. The construction of $\xi$ uses the distance function $\text{dist}(x, \partial\Omega)$ whose regularity is the same as the regularity of $\partial\Omega$. We refer the reader to Section 14.6 in~\cite{GTPDE}  for more details. The outward normal vector $n(x)$ at $x\in\Omega$ is then given by
\[
n(x)= \frac{\nabla_x\xi(x)}{|\nabla_x\xi(x)|}\,,\quad \forall x\in\partial\Omega\,.
\]

For every $(x\,,\theta)\in\overline\Omega\times\mathbb{S}^{n-1}$, we define the \textit{backward exit time} $\tau_-(x,\theta)\geq 0$ and the \textit{backward exit position} on $\p\Omega$ by:
\begin{equation}\label{eqn:def_tau}
    \tau_-(x,\theta):=\sup\{\{0\}\cup \{\tau>0:\,  x- s\theta\in\Omega\ \hbox{for all } 0<s<\tau\}\}\,,
\end{equation}
and
\begin{equation}\label{eqn:def_x_-}
    x_-(x,\theta) := x-\tau_-(x,\theta)\theta\ \in \p\Omega\,.
\end{equation}

There are some basic properties about the exit time and exit location: 
\begin{lemma}[\cite{Guo2010}, Lemma 2]
Suppose $\Omega\subset\mathbb{R}^n$ is strictly convex and open bounded connected domain and has $C^\infty$ boundary. Suppose that there is a smooth function $\xi$  satisfying ~\eqref{eqn:character} and~\eqref{eqn:convex}. For any $(x, \theta) \in\Omega\times\mathbb{S}^{n-1}$, let $\tau_-$ be the backward exit time defined in~\eqref{eqn:def_tau} and $x_-\in\partial\Omega$ be the exit point defined in~\eqref{eqn:def_x_-}. Then
\begin{itemize}
\item[(1)] $(\tau_-(x,\theta), x_-(x,\theta))$ are uniquely determined for each $(x, \theta) \in\Omega\times\mathbb{S}^{n-1}$;
\item[(2)] If $\theta\cdot n(x_-(x,\theta))\neq 0$, then $\tau_-$ and $x_-$ are smooth functions in $\Omega\times \mathbb{S}^{n-1}$.
\end{itemize}
\end{lemma}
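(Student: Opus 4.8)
The plan is to reduce both statements to the behavior of the one-variable function $g(s):=\xi(x-s\theta)$ along the backward ray, exploiting strict convexity through~\eqref{eqn:convex}.

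For part (1), fix $(x,\theta)\in\Omega\times\mathbb{S}^{n-1}$ and consider $I:=\{s\in\mathbb{R}:x-s\theta\in\Omega\}=\{s:g(s)<0\}$. Since $\Omega$ is convex, $I$ is an interval; it contains $0$ because $\xi(x)<0$, it is open because $\Omega$ is open, and it is bounded because $\Omega$ is bounded, so $I=(s_-,s_+)$ with $-\infty<s_-<0<s_+<\infty$. By the definition~\eqref{eqn:def_tau}, $\tau_-(x,\theta)=s_+$, a finite and unambiguously determined number; hence $x_-(x,\theta)=x-s_+\theta$ is unambiguously determined, and letting $s\uparrow s_+$ along points of $\Omega$ while keeping in mind that $x-s_+\theta\notin\Omega$ shows $x_-(x,\theta)\in\overline\Omega\setminus\Omega=\partial\Omega$. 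This settles (1).

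For part (2), I would apply the implicit function theorem to $F(x,\theta,\tau):=\xi(x-\tau\theta)$ at a point $(x_0,\theta_0,\tau_-(x_0,\theta_0))$, the crucial input being transversality of the crossing. On the closed segment $[0,\tau_-]$ the ray lies in $\overline\Omega$, so \eqref{eqn:convex} gives $g''(s)=\sum_{ij}\partial_{ij}\xi(x-s\theta)\,\theta_i\theta_j\ge D_0>0$, i.e.\ $g$ is strictly convex there; combined with $g(0)<0=g(\tau_-)$ this forces $g'(\tau_-)>0$, equivalently $\theta\cdot\nabla_x\xi(x_-)<0$, i.e.\ $\theta\cdot n(x_-)<0$ (using $\nabla_x\xi\neq0$ on $\partial\Omega$). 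In particular $\theta\cdot n(x_-)\neq0$ automatically on $\Omega\times\mathbb{S}^{n-1}$, and $\partial_\tau F\big|_{\tau=\tau_-}=-\,\theta\cdot\nabla_x\xi(x_-)\neq0$, so the implicit function theorem yields a neighborhood $U$ of $(x_0,\theta_0)$ and a $C^\infty$ function $\widetilde\tau$ on $U$ with $\widetilde\tau(x_0,\theta_0)=\tau_-(x_0,\theta_0)$ and $\xi(x-\widetilde\tau(x,\theta)\theta)\equiv0$ on $U$.

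The one step that genuinely needs care is identifying $\widetilde\tau$ with $\tau_-$ on $U$. Since $g'(\tau_-)>0$, the zero of $g$ at $\tau_-(x_0,\theta_0)$ is sign-changing (from $-$ to $+$); shrinking $U$ and using continuity of $F$ and $\partial_\tau F$, for each $(x,\theta)\in U$ the value $\widetilde\tau(x,\theta)>0$ is a sign-changing zero of $s\mapsto\xi(x-s\theta)$ with $\xi(x-s\theta)<0$ on an interval immediately to its left, while $x$ still lies in $\Omega$. Comparing with the interval description $I=(s_-(x,\theta),s_+(x,\theta))$ from part (1) forces $\widetilde\tau(x,\theta)=s_+(x,\theta)=\tau_-(x,\theta)$. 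Hence $\tau_-$ is $C^\infty$ near $(x_0,\theta_0)$, and then so is $x_-(x,\theta)=x-\tau_-(x,\theta)\theta$; as $(x_0,\theta_0)$ was arbitrary, (2) follows. Beyond convexity and the implicit function theorem, this matching of the local IFT branch with the globally defined exit time is the only --- and comparatively minor --- obstacle.
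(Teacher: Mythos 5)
The paper does not prove this lemma at all: it is imported verbatim as Lemma~2 of \cite{Guo2010}, so there is no internal proof to compare against. Judged on its own, your argument is correct and complete. Part (1) via the interval $I=\{s:\xi(x-s\theta)<0\}$ is the standard route, and part (2) correctly isolates the two real issues: transversality of the crossing and the identification of the local implicit-function-theorem branch with the globally defined exit time. Your transversality step is sound --- on $[0,\tau_-]$ the segment stays in $\{\xi\le 0\}$, where \eqref{eqn:convex} applies, so $g''\ge D_0$ and the convexity inequality $g'(\tau_-)\ge \bigl(g(\tau_-)-g(0)\bigr)/\tau_->0$ gives $\theta\cdot n(x_-)<0$ --- and as a byproduct it establishes the strict inequality that the paper only records as an unproved remark immediately after the lemma (``for $(x,\theta)\in\Omega\times\mathbb{S}^{n-1}$ the condition $\theta\cdot n(x_-)\neq 0$ always holds''), which in fact makes the hypothesis of part (2) automatic on $\Omega\times\mathbb{S}^{n-1}$. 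Two cosmetic points only: the matching step is cleanest if you note that $(\widetilde\tau-\epsilon,\widetilde\tau)\subset I=(s_-,s_+)$ forces $\widetilde\tau\le s_+$ while $\widetilde\tau\notin I$ and $\widetilde\tau>s_-$ force $\widetilde\tau\ge s_+$; and you implicitly use that $\xi$ is smooth on a full neighborhood of $\overline\Omega$ (the paper's ``$\xi:\mathbb{R}^{n-1}\to\mathbb{R}$'' is evidently a typo for $\mathbb{R}^{n}$), which is harmless but worth stating.
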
 
Noting that for a strictly convex domain $\Omega$, if $(x,\theta)\in \Omega\times \mathbb{S}^{n-1}$, the condition $\theta\cdot n(x_-(x,\theta))\neq0$ always holds true.

We now present our well-posedness result in Theorem~\ref{thm:well_posedness} and the positivity of the solution in Theorem~\ref{thm:positivity}. 

In Theorem~\ref{thm:well_posedness}, we utilize the contraction mapping principle to prove the unique existence of the solution. 
Specifically, there exist sufficiently small parameters $\delta_j$ (with $j=1,2$) such that when $(u_B, T_B)\in \mathcal{X}_0$ defined by
\begin{align}\label{boundary data}
\mathcal{X}_0:=\{(u_B,T_B)\in  C^\gamma(\Gamma_-)\times C^{2,\gamma}(\p\Omega):\, \|u_B\|_{C^\gamma(\Gamma_-)}<\delta_1,\quad \|T_B\|_{C^{2, \gamma}(\p\Omega)} < \delta_2 \},
\end{align} 
the problem \eqref{nonlinearRTE} has a unique solution $(u, T)\in  C^\gamma(\overline\Omega \times \mathbb{S}^{n-1}) \times C^{2,\gamma}(\overline\Omega)$.

In addition to the well-posedness, the positivity property of the solution is essential to the reconstruction of $\sigma$. In Theorem~\ref{thm:positivity}, one requires additional constraints on the lower bounds of the imposed boundary conditions to ensure that the solution $(u,T)$ is bounded away from zero. In particular, there exist sufficient small parameters $\alpha_j$ and $\delta_j$ (with $j=1,2$) such that when $(u_B, T_B)\in \mathcal{X}$, the solution $u$ and $T$ are strictly positive. Here the space $\mathcal{X}$ is defined to be
\begin{align*}
\mathcal{X}:= &\{(u_B,T_B)\in  C^\gamma(\Gamma_-)\times C^{2,\gamma}(\p\Omega): u_B\in\mathcal{X}_1,\quad T_B\in\mathcal{X}_2\} \,,
\end{align*}
where $\mathcal{X}_1\subset C^\gamma(\Gamma_-)$ and $\mathcal{X}_2\subset C^{2,\gamma}(\p\Omega)$ are respectively defined by:
\begin{align*}
\mathcal{X}_1:=\{u_B\in C^\gamma(\Gamma_-):\ 0<\alpha_1 \leq \min_{(x,\theta)\in\Gamma_-}u_B(x,\theta),\quad \|u_B\|_{C^\gamma(\Gamma_-)}<\delta_1 \} \,,
\end{align*}
and
\begin{align*}
\mathcal{X}_2:=\{T_B\in C^{2,\gamma}(\p\Omega):\ 0<\alpha_2 \leq \min_{x\in\p\Omega}T_B(x),\quad\|T_B\|_{C^{2, \gamma}(\p\Omega)} < \delta_2\}\,.
\end{align*}
It poses a slightly higher restriction than $\mathcal{X}_0$ by imposing the lower bound $\alpha_i$.

We first state the well-posedness theorem.
\begin{theorem}[Well-posedness]\label{thm:well_posedness}
Let $\Omega$ be an open bounded, connected and strictly convex domain in $\mathbb{R}^n$, $n\geq 2$ with smooth boundary. 
Suppose that $\sigma_j$ and $\mu$ in $C^\gamma(\overline\Omega)$, $0<\gamma<1$, satisfy \eqref{sigma} and \eqref{mu}. Then the problem~\eqref{nonlinearRTE} is well-posed with small boundary data. In particular, one has that
\begin{itemize}
\item[(1)]  There exist constants $\delta_j>0$, $j=1,2$ sufficiently small, such that for any $(u_B,T_B)\in \mathcal{X}_0$,
$($defined in \eqref{boundary data}$)$, 
the problem \eqref{nonlinearRTE} has a unique solution $(u, T)\in  C^\gamma(\overline\Omega \times \mathbb{S}^{n-1}) \times C^{2,\gamma}(\overline\Omega)$.
\vskip.2cm
\item[(2)] Moreover, there exists a constant $C$ depending on $\Omega, n, \sigma_M, \mu, \delta_j$, $j=1,2$ so that $(u,T)$ satisfies the estimates:
\begin{align}\label{estimate T}
\|T\|_{C^{2,\gamma}(\overline	\Omega)}\leq C\|T_B\|_{C^{2,\gamma}(\p\Omega)}+ C\|u_B\|_{C^\gamma(\Gamma_-)}\,,
\end{align}
and 
\begin{align}\label{estimate u}
\|u\|_{C^\gamma(\overline\Omega\times\mathbb{S}^{n-1})}\leq C\|u_B\|_{C^\gamma(\Gamma_-)} + C \|T\|_{C^{2,\gamma}(\overline\Omega)}^4\,.
\end{align}
\end{itemize}
\end{theorem}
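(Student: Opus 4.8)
The plan is to recast the coupled system \eqref{nonlinearRTE} as a fixed-point equation for the temperature $T$ and apply the Banach contraction principle on a small ball in $C^{2,\gamma}(\overline\Omega)$. First I would record the two linear solution operators underlying the system. For the transport part, given $f\in C^\gamma(\overline\Omega)$ and $u_B\in C^\gamma(\Gamma_-)$, the equation $\theta\cdot\nabla_x u+\mu u=f$ with $u|_{\Gamma_-}=u_B$ has the explicit mild solution along characteristics,
\[
u(x,\theta)=u_B\bigl(x_-(x,\theta),\theta\bigr)\,e^{-\int_0^{\tau_-(x,\theta)}\mu(x-s\theta)\,ds}+\int_0^{\tau_-(x,\theta)}e^{-\int_0^{s}\mu(x-s'\theta)\,ds'}f(x-s\theta)\,ds\,,
\]
and one must prove the estimate $\|u\|_{C^\gamma(\overline\Omega\times\mathbb{S}^{n-1})}\le C\bigl(\|u_B\|_{C^\gamma(\Gamma_-)}+\|f\|_{C^\gamma(\overline\Omega)}\bigr)$ with $C=C(\Omega,n,\mu)$, using the smoothness of $\tau_-$ and $x_-$ on $\Omega\times\mathbb{S}^{n-1}$ from the cited Lemma (strict convexity forces $\theta\cdot n(x_-)\neq0$). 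For the heat part, Schauder theory for the Dirichlet problem gives a unique $T\in C^{2,\gamma}(\overline\Omega)$ solving $\Delta_x T=g$, $T|_{\partial\Omega}=T_B$, with $\|T\|_{C^{2,\gamma}(\overline\Omega)}\le C(\|g\|_{C^\gamma(\overline\Omega)}+\|T_B\|_{C^{2,\gamma}(\partial\Omega)})$; and averaging over $\theta$ preserves Hölder bounds, $\|\langle u\rangle\|_{C^\gamma(\overline\Omega)}\le\|u\|_{C^\gamma(\overline\Omega\times\mathbb{S}^{n-1})}$.

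Next I would define $\Psi$ on $B_R:=\{T\in C^{2,\gamma}(\overline\Omega):\|T\|_{C^{2,\gamma}(\overline\Omega)}\le R\}$ by: given $T$, let $u=u[T]$ solve the transport equation with source $f=\sigma T^4$ and inflow $u_B$, and let $\tilde T=\Psi(T)$ solve $\Delta_x\tilde T=\sigma T^4-\mu\langle u[T]\rangle$ with $\tilde T|_{\partial\Omega}=T_B$; a fixed point of $\Psi$ is exactly a solution of \eqref{nonlinearRTE} (with $u=u[T]$). Since $C^\gamma(\overline\Omega)$ is a Banach algebra on a bounded domain, $\|\sigma T^4\|_{C^\gamma}\le C\sigma_M\|T\|_{C^{2,\gamma}}^4$, so chaining the two linear estimates gives $\|\Psi(T)\|_{C^{2,\gamma}}\le C(\sigma_M R^4+\mu_M\delta_1+\delta_2)$ for $T\in B_R$; choosing $R\sim\delta_1+\delta_2$ and then $\delta_1,\delta_2$ small enough makes the $R^4$ loss negligible, so $\Psi(B_R)\subset B_R$. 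For the contraction estimate, subtract the equations for $T_1,T_2\in B_R$ and use $|T_1^4-T_2^4|\le4(|T_1|+|T_2|)^3|T_1-T_2|$ with the algebra property to get $\|\sigma(T_1^4-T_2^4)\|_{C^\gamma}\le C\sigma_M R^3\|T_1-T_2\|_{C^{2,\gamma}}$, hence $\|u[T_1]-u[T_2]\|_{C^\gamma}\le CR^3\|T_1-T_2\|_{C^{2,\gamma}}$ and $\|\Psi(T_1)-\Psi(T_2)\|_{C^{2,\gamma}}\le CR^3\|T_1-T_2\|_{C^{2,\gamma}}$, which is a strict contraction for $R$ small. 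The contraction principle then yields a unique fixed point in $B_R$, giving assertion (1); full uniqueness follows since the a priori estimate below forces any solution with data in $\mathcal{X}_0$ into $B_R$.

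For assertion (2), estimate \eqref{estimate u} is just the transport estimate with $f=\sigma T^4$. For \eqref{estimate T}, feeding \eqref{estimate u} into the Schauder estimate for $\Delta_x T=\sigma T^4-\mu\langle u\rangle$ gives $\|T\|_{C^{2,\gamma}}\le C(\sigma_M R^3\|T\|_{C^{2,\gamma}}+\mu_M\|u_B\|_{C^\gamma}+\mu_M\sigma_M R^3\|T\|_{C^{2,\gamma}}+\|T_B\|_{C^{2,\gamma}})$, and absorbing the $R^3\|T\|_{C^{2,\gamma}}$ terms on the left (licit for $R$ small) yields the stated bound.

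I expect the main obstacle to be the transport estimate, i.e., showing that the characteristic formula genuinely lands in $C^\gamma(\overline\Omega\times\mathbb{S}^{n-1})$ with a clean constant: one must control the Hölder moduli of $\tau_-(x,\theta)$, $x_-(x,\theta)$ and the attenuation factor jointly in $x$ and $\theta$ up to the boundary, and handle the composition $u_B(x_-(x,\theta),\theta)$ with $u_B$ merely Hölder. Strict convexity --- through the uniform transversality $\theta\cdot n(x_-)\neq0$ of the cited Lemma together with \eqref{eqn:convex} --- is exactly what keeps these quantities Lipschitz/Hölder with finite constants, so this is where the geometric hypotheses on $\Omega$ enter; the remainder is a routine combination of Schauder theory, the Hölder algebra property, and the contraction principle.
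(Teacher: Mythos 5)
Your proposal is correct and follows essentially the same route as the paper: a Banach fixed-point argument for the temperature in a small ball of $C^{2,\gamma}(\overline\Omega)$, where the map is a transport solve via the explicit characteristic formula followed by a Schauder solve of the Dirichlet problem, with smallness of the data giving both invariance of the ball and the contraction, and the Hölder algebra property handling $T^4$. The only difference is bookkeeping: the paper first subtracts the solution $(u_0,T_0)$ of the source-free linear system so that the iteration runs on the remainder $(\tilde u,\widetilde T)$ with homogeneous boundary data, whereas you iterate directly on $T$ with inhomogeneous data; both yield the same estimates (and both, in truth, only give uniqueness within the small ball).
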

\begin{proof}
	\textbf{Step 1: Perform linearization.}
	Let $(u_0,T_0)$ solve the following problem:
	\begin{align}\label{BVP:RTE linear}
	\begin{cases}
	\theta\cdot \nabla_x u_0 +\mu u_0= 0&\hbox{in }\Omega\times\mathbb{S}^{n-1}\,,\\
	\Delta_x T_0= - \mu \langle u_0\rangle &\hbox{in }\Omega\,,\\
	\end{cases}
	\end{align}
	with the same boundary data
	\[
	u_0|_{\Gamma_-}=u_B\quad \hbox{ and }\quad T_0|_{\partial\Omega}=T_B\,.
	\]
	If $(u,T)$ solves~\eqref{nonlinearRTE}, then we take the difference and call
	\[
	\tilde{u}:=u-u_0\quad \hbox{ and }\quad \widetilde{T}:=T-T_0\,.
	\]
	Then it is easy to see that these remainder terms satisfy
	\begin{align}\label{BVP:RTE nonlinear}
	\begin{cases}
	\theta\cdot \nabla_x \tilde{u} + \mu\tilde{u}=\sigma(T_0+\widetilde{T})^4&\hbox{in }\Omega\times\mathbb{S}^{n-1}\,,\\
	\Delta_x \widetilde{T}=\sigma(T_0+\widetilde{T})^4 - \mu \langle \tilde{u}\rangle &\hbox{in }\Omega\,,\\
	\end{cases}
	\end{align}
	with trivial boundary conditions
	\[
	\tilde{u}|_{\Gamma_-}=0\quad \hbox{ and }\quad \widetilde{T}|_{\partial\Omega}=0\,.
	\]
	
	The $(u_0,T_0)$ system \eqref{BVP:RTE linear} is well-posed. Indeed, since $u_0$ satisfies the transport equation in \eqref{BVP:RTE linear} with boundary data $u_B$, it is straightforward to write down the explicit solution:
	\begin{align}\label{estimate u0}
	u_0(x,\theta) = e^{-\int^{\tau_-(x,\theta)}_0 \mu(x-s\theta)ds} u_B(x-\tau_-(x,\theta)\theta)\,.
	\end{align}
	Considering $u_B\in C^\gamma(\Gamma_-)$, $\mu\in C^\gamma(\overline\Omega)$ and $\tau_-$ is smooth, we have 
	$$
	\max\{\|u_0\|_{C^\gamma(\overline\Omega\times\mathbb{S}^{n-1})}\,, \|\langle u_0\rangle\|_{C^\gamma(\overline\Omega)}\}\leq C  \|u_B\|_{C^\gamma(\Gamma_-)} < C\delta_1 \,.
	$$
	The unique existence of $T_0$ to the elliptic equation is also straightforward. Noting that from Theorem~6.8 in~\cite{GTPDE}, we have the following estimate:
	\begin{align}\label{est T0}
	\|T_0\|_{C^{2,\gamma}(\overline\Omega)} 
	&\leq C \|T_B\|_{C^{2,\gamma}(\p\Omega)}+C \|\langle u_0\rangle\|_{C^{\gamma}(\overline\Omega)} \notag\\
	&\leq C \|T_B\|_{C^{2,\gamma}(\p\Omega)}+C \|u_B\|_{C^\gamma(\Gamma_-)} \leq C(\delta_1+\delta_2)\,,
	\end{align} 
	where the constant $C$ is independent of $T_B$ and $u_B$.
	
	These boil the well-posedness theory for $(u,T)$ system \eqref{nonlinearRTE} down to showing that of $(\tilde{u}\,,\tilde{T})$ system \eqref{BVP:RTE nonlinear}, and is what we will prove below. To that end, we will employ the contracting map argument. 
	
	\textbf{Step 2: Design a contraction map.}
    We consider the following problem:
	\begin{equation}\label{RTE:linear}
	\Delta_x \phi= g  \quad\hbox{in }\Omega\,,\quad \phi=0 \quad\hbox{on }\p\Omega\,.
	\end{equation} 
	According to~\cite{GTPDE}, for any $g\in C^\gamma(\overline\Omega)$, we can define the solution operator of \eqref{RTE:linear} by $\mathcal{L}^{-1}$. Then $\mathcal{L}^{-1}(g)$ is the unique solution of \eqref{RTE:linear}. Moreover, one has that
	\begin{align}\label{elliptic estimate}
     \|\mathcal{L}^{-1}(g)\|_{C^{2,\gamma}(\overline\Omega)}\leq C \|g\|_{C^\gamma(\overline\Omega)}\,,
	\end{align}
	for a constant $C>0$, independent of $g$.  
	
	We define the subset $\mathcal{S}$ in $C^{2,\gamma}(\overline\Omega)$ by
	\begin{align}\label{ellipticReg}
	\mathcal{S} =\{ \varphi\in C^{2,\gamma}(\overline\Omega):\ \varphi|_{\p\Omega}=0,\  \|\varphi\|_{C^{2,\gamma}(\overline\Omega)} < \varepsilon\}\,,
	\end{align}
	where the constant $\varepsilon>0$ will be determined later.
	We are now ready to define the operator $F$ on $\mathcal{S}$ by 
	$$
	F(\varphi)(x):= \mathcal{L}^{-1} (\sigma(T_0+\varphi)^4 -\mu \langle \tilde{u}_\varphi\rangle)\,,
	$$
	for any $\varphi\in\mathcal{S}$, where $\tilde{u}_\varphi$ is the solution to the boundary value problem for the transport equation
	\begin{align}\label{proof reminder u}
	\theta\cdot \nabla_x \tilde{u}_\varphi+ \mu\tilde{u}_\varphi=\sigma(T_0+\varphi)^4 \qquad \hbox{with } \tilde{u}_\varphi|_{\Gamma_-}=0\,.
	\end{align}
	In fact, for any $\varphi\in\mathcal{S}$, one has $\sigma(T_0+\varphi)^4 -\mu \langle \tilde{u}_\varphi\rangle \in C^\gamma(\overline\Omega)$. Then based on the definition of $\mathcal{L}^{-1}$, $F(\varphi)$ is the solution of $\Delta_x F(\varphi)=\sigma(T_0+\varphi)^4 - \mu \langle \tilde{u}_\varphi\rangle$ with trivial boundary condition.
	As a result, solving \eqref{BVP:RTE nonlinear} is reduced to finding a fixed point of $F$ in $\mathcal{S}$.

	Toward this goal, we will show that $F$ is a contracting map which maps from the set $\mathcal{S}$ into itself. Specifically, we will first show for all $\varphi\in \mathcal{S}$, $F(\varphi)\in\mathcal{S}$, and then prove that there is a constant $C_F<1$ so that for all $\varphi_1\,,\varphi_2\in\mathcal{S}$:
	\begin{equation}\label{eqn:contracting}
	\|F(\varphi_1)-F(\varphi_2)\|_{C^{2,\gamma}(\overline\Omega)}\leq C_F\|\varphi_1 - \varphi_2\|_{C^{2,\gamma}(\overline\Omega)}\,.
	\end{equation}
	
	To show $F(\mathcal{S})\subset\mathcal{S}$, we first explicitly write down the solution to the equation~\eqref{proof reminder u}:
	\begin{align}\label{expression tilde u}
	\tilde{u}_\varphi(x,\theta) = \int^{\tau_-(x,\theta)}_0 e^{-\int^s_0 \mu(x-\eta \theta)d\eta} \sigma (T_0+\varphi)^4 (x-s\theta)\,ds,\qquad  (x,\theta)\in\overline\Omega\times\mathbb{S}^{n-1}\,.
	\end{align}
	This leads to the following estimates
	\begin{align}\label{estimate tilde u}
	\max\{\|\langle \tilde{u}_\varphi\rangle\|_{C^\gamma(\overline\Omega)}\,, \|\tilde{u}_\varphi\|_{C^\gamma(\overline\Omega \times \mathbb{S}^{n-1})}\}
	&\leq C \|\sigma (T_0+\varphi)^4\|_{C^{\gamma }(\overline\Omega)}  \notag \\
	&\leq C\sigma_M  \|T_0 + \varphi\|^4_{C^{2,\gamma }(\overline\Omega)}  \notag \\
	&\leq C\sigma_M (\|T_0\|_{C^{2,\gamma}(\overline\Omega)} + \|\varphi\|_{C^{2,\gamma }(\overline\Omega)})^4\,,
	\end{align}
	where we used the fact that $\mu\in C^\gamma(\overline\Omega)$, $T_0,\, \varphi\in C^{2,\gamma}(\overline\Omega)$, $\tau_-$ is smooth, and $\|\sigma\|_{C^\gamma(\overline\Omega)}\leq \sigma_M$ as in \eqref{sigma}. Hereafter, we denote by $C$ any positive constant which may vary from line to line.
	
	Combining \eqref{est T0}, \eqref{elliptic estimate}, \eqref{ellipticReg}, and \eqref{estimate tilde u}, we derive
	\begin{align*}
	\|F(\varphi)\|_{C^{2,\gamma}(\overline\Omega)}
	&=\|\mathcal{L}^{-1} (\sigma(T_0+\varphi)^4 - \mu \langle \tilde{u}_\varphi\rangle)\|_{C^{2,\gamma}(\overline\Omega)}\\
	&\leq C \| \sigma (T_0+\varphi)^4 -\mu \langle \tilde{u}_\varphi\rangle\|_{C^\gamma(\overline\Omega)}\\
	&\leq C (\|T_0\|_{C^{2,\gamma}(\overline\Omega)} +  \|\varphi\|_{C^{2,\gamma}(\overline\Omega)})^4 \\
	&\leq  C (\delta_1+\delta_2 + \varepsilon)^4  \,.
	\end{align*}
	With $\delta_i$ and $\varepsilon$ small enough, one has
	\begin{equation*}
	C(\delta_1+\delta_2 + \varepsilon)^4 < \varepsilon\,,
	\end{equation*}
	meaning $F(\varphi)\in\mathcal{S}$. To show~\eqref{eqn:contracting}, we note
	\begin{align*}
	\|F(\varphi_1) - F(\varphi_2)\|_{C^{2,\gamma}(\overline\Omega)}
	&\leq C\|(\sigma (T_0+\varphi_1)^4 - \mu \langle \tilde{u}_{\varphi_1}\rangle)-(\sigma (T_0+\varphi_2)^4 - \mu\langle \tilde{u}_{\varphi_2}\rangle) \|_{C^\gamma(\overline\Omega)}\\
	&\leq C \sigma_M (\varepsilon + \delta_1+ \delta_2)^3\|\varphi_1-\varphi_2\|_{C^{2,\gamma}(\overline\Omega)} \,,
	\end{align*}
	where we used \eqref{est T0}, \eqref{elliptic estimate}, \eqref{ellipticReg} again, and the estimate
	$$
	\|\mu \langle \tilde{u}_{\varphi_1}\rangle-\mu \langle \tilde{u}_{\varphi_2}\rangle\|_{C^{\gamma}(\overline\Omega)}  \leq C\|\sigma (T_0+\varphi_1)^4-\sigma (T_0+\varphi_2)^4 \|_{C^{\gamma}(\overline\Omega)}\,
	$$
	due to \eqref{expression tilde u}.
	Thus, we further choose $\varepsilon$, $\delta_1$ and $\delta_2$ sufficiently small so that they also satisfy
	\[
	C\sigma_M(\varepsilon +\delta_1+\delta_2)^3 <1\,,
	\]
	which finally leads to~\eqref{eqn:contracting}.
	
	\textbf{Step 3: Summarize the well-posedness result.} Since $F(\mathcal{S})\subset\mathcal{S}$ and $F$ is a contraction map on $\mathcal{S}$, by the contraction mapping principle, there exists a unique fixed point of $F$ on $\mathcal{S}$. We term this fixed point $\widetilde{T}\in\mathcal{S}$, and term the function $\tilde{u}$ the corresponding $\tilde{u}_{\widetilde{T}}$ as defined in~\eqref{expression tilde u}. This fixed point satisfies
	\[
	F(\widetilde{T}) = \widetilde{T}
	\]
	and thus $(\tilde{u}, \widetilde{T})$ is the solution of \eqref{BVP:RTE nonlinear}. Furthermore, $(u,T)\in C^\gamma(\overline\Omega\times\mathbb{S}^{n-1})\times C^{2,\gamma}(\overline\Omega)$ of the form
	$$
	u = u_0+\tilde{u}\quad \hbox{ and }\quad T=T_0+\widetilde{T}
	$$ 
	is then the solution of \eqref{nonlinearRTE}.
	
	\textbf{Step 4: Prove the boundedness of solutions.}
	To show the boundedness of $u$ and $T$, we recall that $(\tilde{u} = \tilde{u}_{\widetilde{T}}\,,\widetilde{T})$ is the solution of \eqref{BVP:RTE nonlinear} and then we obtain
	\begin{align}\label{estimate tilde T}
	\|\widetilde{T}\|_{C^{2,\gamma}(\overline\Omega)}
	&\leq C\| \sigma (\widetilde{T}+T_0)^4 - \mu \langle\tilde{u}_{\widetilde{T}}\rangle\|_{C^\gamma(\overline\Omega)} \notag\\
	&\leq  C\sigma_M (\|T_0\|_{C^{2,\gamma}(\overline\Omega)} +  \|\widetilde{T}\|_{C^{2,\gamma}(\overline\Omega)})^4 \notag\\
	&\leq  C\sigma_M \|T_0\|_{C^{2,\gamma}(\overline\Omega)}^4 + C\sigma_M \|\widetilde{T}\|_{C^{2,\gamma}(\overline\Omega)}\Big(\|\widetilde{T}\|_{C^{2,\gamma}(\overline\Omega)}^3+  \|T_0\|_{C^{2,\gamma}(\overline\Omega)}^2\|\widetilde{T}\|_{C^{2,\gamma}(\overline\Omega)} \notag\\
	&\hskip3cm + \|T_0\|_{C^{2,\gamma}(\overline\Omega)}^3+ \|T_0\|_{C^{2,\gamma}(\overline\Omega)}\|\widetilde{T}\|_{C^{2,\gamma}(\overline\Omega)}^2 \Big)\notag\\
	&\leq  C \sigma_M(\delta_1+\delta_2 )^3  \|T_0\|_{C^{2,\gamma}(\overline\Omega)} + C\sigma_M\Big(\varepsilon^3 + \varepsilon (\delta_1+\delta_2)^2 \notag\\
	&\hskip3cm+  (\delta_1+\delta_2)^3+ \varepsilon^2( \delta_1+ \delta_2) \Big)\|\widetilde{T}\|_{C^{2,\gamma}(\overline\Omega)} \,.
	\end{align}
	Now we let $\varepsilon,\delta_1,\delta_2$ to be small enough such that the second term is controlled by $\frac{1}{2}\|\widetilde{T}\|_{C^{2,\gamma}(\overline\Omega)}$, which eventually leads to
	\begin{align*}
	\|\widetilde{T}\|_{C^{2,\gamma}(\overline\Omega)} &\leq  C\sigma_M ( \delta_1+ \delta_2 )^3   \|T_0\|_{C^{2,\gamma}(\overline\Omega)} \\
	&\leq  C\|T_B\|_{C^{2,\gamma}(\p\Omega)}+C\|u_B\|_{C^\gamma(\Gamma_-)} \,.
	\end{align*}
	This concludes~\eqref{estimate T}. Moreover, the estimate \eqref{estimate u} follows by combing~\eqref{estimate u0} with \eqref{estimate tilde u}.
	
\end{proof}

Furthermore we have the positivity result.
\begin{theorem}[Positivity]\label{thm:positivity}
Under the same condition as in Theorem~\ref{thm:well_posedness}, let $\alpha_1,\,\alpha_2$ satisfy 
$$0< \alpha_j \leq \delta_j,\qquad \alpha_2^4\leq {\alpha_1 \mu_m e^{-d\mu_M} \over \sigma_M},\qquad j=1,2.$$ Then we have the positivity of the solution, namely:
		\begin{align}\label{u away from zero}
		 0<e^{-d\mu_M}\alpha_1 \leq u\quad\ \ \hbox{in }\Omega\times\mathbb{S}^{n-1}\,, 
		\end{align}
		and 
		\begin{align}\label{T away from zero}
		  0<\alpha_2 \leq T\qquad \hbox{in }\Omega\,,
		\end{align}
		where $d:=$ \textnormal{diam}$(\Omega)$ is the diameter of $\Omega$ and $\mu_M$ is defined in \eqref{mu}.  
\end{theorem}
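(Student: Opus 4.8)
The plan is to prove the two lower bounds in turn: \eqref{u away from zero} will follow almost immediately from the Duhamel representation of $u$, and it will then feed a comparison/maximum-principle argument yielding \eqref{T away from zero}. I expect the only genuine difficulty to lie in the temperature estimate, where the quartic nonlinearity obstructs a direct application of the maximum principle.

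For the radiation bound I would start from the representation of the transport solution already used in the proof of Theorem~\ref{thm:well_posedness} (combine \eqref{estimate u0} and \eqref{expression tilde u}, or simply solve the first equation of \eqref{nonlinearRTE} by characteristics):
\[
u(x,\theta)=e^{-\int_0^{\tau_-(x,\theta)}\mu(x-s\theta)\,ds}\,u_B\big(x_-(x,\theta)\big)+\int_0^{\tau_-(x,\theta)}e^{-\int_0^{s}\mu(x-\eta\theta)\,d\eta}\,\big(\sigma\,T^4\big)(x-s\theta)\,ds .
\]
The integral term is nonnegative because $\sigma>0$ and $T$ is real, so $T^4\ge0$; the first term is bounded below by $e^{-d\mu_M}\alpha_1$ because $u_B\ge\alpha_1$ on $\Gamma_-$, $\mu\le\mu_M$ by \eqref{mu}, and $0\le\tau_-(x,\theta)\le d$. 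Hence $u\ge e^{-d\mu_M}\alpha_1$ on $\Omega\times\mathbb{S}^{n-1}$, which is \eqref{u away from zero}, and averaging over $\theta$ gives $\langle u\rangle\ge e^{-d\mu_M}\alpha_1$ in $\Omega$.

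For the temperature I would view the second equation of \eqref{nonlinearRTE} as the semilinear elliptic problem $\Delta_x T-\sigma T^4=-\mu\langle u\rangle$ in which $\langle u\rangle$ is now known to satisfy $\mu\langle u\rangle\ge\mu_m e^{-d\mu_M}\alpha_1$. The key observation is that the constant $\alpha_2$ is a subsolution: by \eqref{sigma} and the hypothesis $\sigma_M\alpha_2^4\le\mu_m e^{-d\mu_M}\alpha_1$,
\[
\Delta_x(\alpha_2)-\sigma\alpha_2^4+\mu\langle u\rangle=-\sigma\alpha_2^4+\mu\langle u\rangle\ge-\sigma_M\alpha_2^4+\mu_m e^{-d\mu_M}\alpha_1\ge 0\quad\text{in }\Omega,
\]
while on $\partial\Omega$ one has $\alpha_2\le T_B=T$. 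Setting $w:=\alpha_2-T$ and subtracting the subsolution inequality from the equation for $T$ gives
\[
\Delta_x w\ge\sigma\big(\alpha_2^4-T^4\big)=c(x)\,w\quad\text{in }\Omega,\qquad c(x):=\sigma(x)\big(\alpha_2+T(x)\big)\big(\alpha_2^2+T(x)^2\big),\qquad w\le0\ \text{on }\partial\Omega .
\]
Provided $c\ge0$, the weak maximum principle for $\Delta_x-c$ with $c\ge0$ forces $\sup_\Omega w\le\sup_{\partial\Omega}w^+=0$, i.e.\ $T\ge\alpha_2$ in $\Omega$, which is \eqref{T away from zero}.

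The hard part, as anticipated, is to secure $c\ge0$, i.e.\ to rule out that $T$ ever dips below $-\alpha_2$; in fact I expect $T>0$ throughout $\Omega$. The obstruction is genuine: linearizing the quartic produces a zeroth-order coefficient proportional to $T^3$, whose sign is that of $T$, so the maximum principle cannot be applied to $T$ before its sign is known, and trying to run the same comparison to obtain $T\ge-\alpha_2$ is circular. I would break the circularity with the perturbative structure of the constructed solution: writing $T=T_0+\widetilde T$ as in the proof of Theorem~\ref{thm:well_posedness}, the function $T_0$ solves $\Delta_x T_0=-\mu\langle u_0\rangle\le0$ with $T_0|_{\partial\Omega}=T_B\ge\alpha_2$, so $T_0\ge\alpha_2$ in $\overline\Omega$ by the minimum principle, while the remainder obeys $\|\widetilde T\|_{C^{2,\gamma}(\overline\Omega)}\le C\sigma_M(\delta_1+\delta_2)^3\|T_0\|_{C^{2,\gamma}(\overline\Omega)}$ (see \eqref{estimate tilde T} and \eqref{est T0}), a quantity of order $(\delta_1+\delta_2)^4$. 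Hence $T\ge\alpha_2-\|\widetilde T\|_{\infty}>0$ as soon as $\|\widetilde T\|_\infty<\alpha_2$, which is compatible with the constraints $0<\alpha_j\le\delta_j$ and $\sigma_M\alpha_2^4\le\mu_m e^{-d\mu_M}\alpha_1$ once the smallness parameters are chosen appropriately (it suffices, for instance, to take $\delta_1,\delta_2$ small with $\alpha_j$ comparable to $\delta_j$). With $T>0$ in hand, $c\ge0$ and the comparison step above completes the proof; the $u$-estimate and the subsolution computation being routine, this sign/circularity issue is the only place I expect to need care.
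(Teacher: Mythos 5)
Your proposal follows the same skeleton as the paper's proof but catches, and repairs, a point the paper glosses over. The $u$-bound is identical: the paper writes $u=u_0+\tilde u$, notes $\tilde u\ge 0$ from \eqref{expression tilde u}, and bounds $u_0\ge e^{-d\mu_M}\alpha_1$, exactly as you do. For the temperature, the paper also compares $T$ with the constant $\alpha_2$: it sets $P=T-\alpha_2$, supposes $\Omega^-=\{T<\alpha_2\}$ is nonempty, deduces $\Delta_x P<0$ there from $\sigma\alpha_2^4\le\mu\langle u\rangle$ \emph{together with the assertion} ``$T^4<\alpha_2^4$ in $\Omega^-$'', and concludes via the minimum principle on $\Omega^-$; your subsolution computation with $w=\alpha_2-T$ and $\Delta_x w\ge c(x)w$ is the same device in different packaging. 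The genuinely new content of your write-up is the observation that ``$T<\alpha_2\Rightarrow T^4<\alpha_2^4$'' (equivalently, your condition $c\ge 0$) presupposes $T>-\alpha_2$, which the paper never verifies — you are right that this is a real issue for a quartic nonlinearity, and your patch is sound: $T_0\ge\alpha_2$ because $\Delta_x T_0=-\mu\langle u_0\rangle\le 0$ with $T_0|_{\partial\Omega}=T_B\ge\alpha_2$, while $\|\widetilde T\|_{C^{2,\gamma}(\overline\Omega)}\le C\sigma_M(\delta_1+\delta_2)^3\|T_0\|_{C^{2,\gamma}(\overline\Omega)}=O((\delta_1+\delta_2)^4)$ follows from Step 4 of the proof of Theorem~\ref{thm:well_posedness}. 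The price is the extra smallness relation $C(\delta_1+\delta_2)^4<\alpha_2$, which is not among the stated hypotheses: the theorem only requires $0<\alpha_2\le\delta_2$ and $\sigma_M\alpha_2^4\le\alpha_1\mu_m e^{-d\mu_M}$, so $\alpha_2$ may in principle be taken arbitrarily small relative to $\delta_1+\delta_2$. You flag this honestly, but it means your argument proves the theorem under a mildly strengthened choice of parameters rather than verbatim; the paper's own proof, as written, would need the same (or some other) supplement to justify its $T^4<\alpha_2^4$ step.
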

\begin{proof}
To show \eqref{u away from zero} and \eqref{T away from zero}, that is, the boundedness from below, we utilize the contradiction argument. 
We define a new function $P$ by
\[
P(x):=T(x)-\alpha_2.
\]
Since $\alpha_2$ is a constant, 	$P\in C^{2,\gamma}(\overline\Omega)$ satisfies
\begin{align}\label{BVP:elliptic 2}
\left\{\begin{array}{ll}
\Delta_x P =\sigma(T^4 - \alpha_2^4)+ \sigma \alpha_2^4- \mu \langle u\rangle&\hbox{in }\Omega\,,\\
P = T_B -\alpha_2 &\hbox{on }\partial\Omega\,.
\end{array}\right.
\end{align}
To show $T\geq\alpha_2$, it amounts to showing $P\geq 0$ in the entire domain $\Omega$. By the contradiction argument, suppose that it is not true, then the subdomain:
\[
\Omega^-:=\{x\in \Omega:\ T -\alpha_2<0\}
\]
is not empty. Since $P$ is continuous, one has that $\Omega^-$ is an open set. From \eqref{estimate u0}, we can deduce that 
$$
u(x,\theta)=u_0(x,\theta)+\tilde{u}(x,\theta)\geq u_0(x,\theta)\geq e^{-d\mu_M} u_B(x-\tau_-(x,\theta)\theta)\geq e^{-d\mu_M}\alpha_1\,,
$$
where we used the fact that $\tilde{u}\geq 0$ according to~\eqref{expression tilde u}, and $d=\diam(\Omega)$ is the diameter of $\Omega$. It immediately leads to 
$$
e^{-d\mu_M}\alpha_1\leq  \langle u\rangle (x)\,.
$$
Since we choose the constant $\alpha_2$ to satisfy  
$$
0< \alpha_2 \leq \delta_2 \quad\hbox{ and }\quad \alpha_2^4\leq {\alpha_1 \mu_m e^{-d\mu_M} \over \sigma_M}\,,
$$
we can derive that
$$
\sigma(x) \alpha_2^4  \leq \sigma_M\alpha_2^4 \leq \alpha_1 \mu_m e^{-d\mu_M}\leq \mu(x)\langle u\rangle(x)
$$ 
for every $x\in\Omega$. Combining it with $T^4<\alpha_2^4$ in $\Omega^-$, the right-hand side of the equation \eqref{BVP:elliptic 2} actually satisfies
$$
\sigma(T^4 - \alpha_2^4)+ \sigma \alpha_2^4- \mu \langle u\rangle <0\qquad \hbox{in }\Omega^-\,,
$$
which gives that
$$
\Delta_x P  < 0 \qquad\hbox{in }\Omega^-\,.  
$$ 
When $\p\Omega^-\cap\p\Omega =\emptyset$, the function $P=0$ on $\p\Omega^-$, and when $\p\Omega^-\cap\p\Omega\neq \emptyset$, since $T_B-\alpha_2\geq 0$ on $\p\Omega$, the minimum value of $P$ on $\p\Omega^-$ is nonnegative. Thus, $P|_{\partial\Omega^-}\geq 0$. By Maximum principle \cite{GTPDE}, the solution $P$ satisfies that 
$$
\min_{\overline{\Omega^-} } P=\min_{\p \Omega^-} P \geq 0\,,
$$ 
which implies that
$$
P=T-\alpha_2 \geq 0\qquad \hbox{in }\Omega^-\,.
$$
This contradicts to the definition of $\Omega^-$, suggesting that $\Omega^-$ is indeed empty. Then we obtain $P\geq 0$ and $T\geq \alpha_2$ for all $x\in\Omega$.
\end{proof}

\begin{remark}\label{RK:solution}
Based on Theorem~\ref{thm:well_posedness} and Theorem~\ref{thm:positivity}, we have showed that there exist constants $\alpha_j,\delta_j$, $j=1,2$ sufficiently small such that when $(u_B,T_B)\in \mathcal{X}$, the problem \eqref{nonlinearRTE} has a unique and positive solution $(u,T)\in  C^\gamma(\overline\Omega \times \mathbb{S}^{n-1}) \times C^{2,\gamma}(\overline\Omega)$ satisfying the lower bounds \eqref{u away from zero} and \eqref{T away from zero}. Moreover, the solution also satisfies the estimates \eqref{estimate T} and \eqref{estimate u}.
\end{remark}

\begin{remark}
It is worth emphasizing that the well-posedness result holds in general for any sufficiently small boundary data $(u_B,T_B)$ in $C^\gamma(\Gamma_-)\times C^{2, \gamma}(\p\Omega)$. In this paper we crucially need the strict positivity of $T$ as stated in~\eqref{T away from zero} and thus more constraints, such as the lower bounds of $(u_B,T_B)$, are added in order to derive this. Using the contraction mapping to show the well-posedness is a fairly standard technique and has been applied for nonlinear Boltzmann equations \cite{Klar, LUY2020, Li_2020} and for nonlinear elliptic equations \cite{AZ2017, FO19, Kang2002, KU201909, LaiL2020, LLLS2019nonlinear}.
\end{remark}

An immediate corollary is the following result:
\begin{corollary}
Suppose that $\sigma$ and $\mu$ in $C^\gamma(\overline{\Omega})$ satisfy~\eqref{sigma}-\eqref{mu}. The boundary map $\mathcal{A}_\sigma$, defined in~\eqref{eqn:def_A}, is a well-defined map from $\mathcal{X}\subset C^\gamma(\Gamma_-)\times C^{2, \gamma}(\p\Omega)$ to $C^\gamma(\Gamma_+)$. 
\end{corollary}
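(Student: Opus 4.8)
The plan is to assemble the corollary directly from the two theorems and the remark that precedes it, since the corollary is essentially a bookkeeping statement: the operator $\mathcal{A}_\sigma$ in \eqref{eqn:def_A} takes $(u_B,T_B)$ to the outgoing trace $u|_{\Gamma_+}$, so I need (i) that there is a genuine solution $(u,T)$ to assign, (ii) that it is unique, so the assignment is single-valued, and (iii) that $u|_{\Gamma_+}$ actually lies in the target space $C^\gamma(\Gamma_+)$. First I would invoke Theorem~\ref{thm:well_posedness}(1) together with Theorem~\ref{thm:positivity} (packaged in Remark~\ref{RK:solution}) to fix constants $\alpha_j,\delta_j$, $j=1,2$, small enough that for every $(u_B,T_B)\in\mathcal{X}$ the system \eqref{nonlinearRTE} has a unique solution $(u,T)\in C^\gamma(\overline\Omega\times\mathbb{S}^{n-1})\times C^{2,\gamma}(\overline\Omega)$. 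This pins down the set $\mathcal{X}$ on which $\mathcal{A}_\sigma$ is to be defined and guarantees the map is well-defined (single-valued) there.

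Next I would address the regularity of the image. Since $u\in C^\gamma(\overline\Omega\times\mathbb{S}^{n-1})$, its restriction to the closed subset $\Gamma_+\subset\partial\Omega\times\mathbb{S}^{n-1}$ is automatically in $C^\gamma(\Gamma_+)$, with $\|u|_{\Gamma_+}\|_{C^\gamma(\Gamma_+)}\le\|u\|_{C^\gamma(\overline\Omega\times\mathbb{S}^{n-1})}$. Combining the estimates \eqref{estimate T} and \eqref{estimate u} from Theorem~\ref{thm:well_posedness}(2) then yields a bound
\[
\|\mathcal{A}_\sigma(u_B,T_B)\|_{C^\gamma(\Gamma_+)}\le C\|u_B\|_{C^\gamma(\Gamma_-)}+C\big(\|T_B\|_{C^{2,\gamma}(\partial\Omega)}+\|u_B\|_{C^\gamma(\Gamma_-)}\big)^4,
\]
which shows the image is a bounded (in fact explicitly controlled) subset of $C^\gamma(\Gamma_+)$. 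Thus $\mathcal{A}_\sigma\colon\mathcal{X}\to C^\gamma(\Gamma_+)$ is well-defined, completing the proof.

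Honestly, there is no real obstacle here: every ingredient has already been established, and the corollary is just a restatement in the language of the boundary map. The only point needing a word of care is making sure the domain of definition is $\mathcal{X}$ (not the larger $\mathcal{X}_0$), because the positivity in Theorem~\ref{thm:positivity} — and hence, implicitly, the well-definedness claimed in \eqref{eqn:def_A} as used later in the paper — requires the lower-bound constraints built into $\mathcal{X}_1,\mathcal{X}_2$. I would therefore state the corollary's proof in one short paragraph: cite Remark~\ref{RK:solution} for existence and uniqueness of $(u,T)$ on $\mathcal{X}$, observe that trace to $\Gamma_+$ preserves the $C^\gamma$ class, and quote \eqref{estimate T}–\eqref{estimate u} for the quantitative bound. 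No new estimate or technical step is required.
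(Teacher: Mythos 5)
Your proposal is correct and follows essentially the same route as the paper: invoke the well-posedness results for existence and uniqueness of $(u,T)$ on $\mathcal{X}$, and combine the estimates \eqref{estimate T} and \eqref{estimate u} to bound $\|u\|_{C^\gamma(\Gamma_+)}$ by $\|u\|_{C^\gamma(\overline\Omega\times\mathbb{S}^{n-1})}$, yielding exactly the bound the paper records. Your extra remarks on single-valuedness and on $\mathcal{X}$ versus $\mathcal{X}_0$ are accurate but do not change the argument.
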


\begin{proof}
From \eqref{estimate T} and $\eqref{estimate u}$, one has 
\begin{align*} 
\|u\|_{C^\gamma(\Gamma_+)} \leq \|u\|_{C^\gamma(\overline{\Omega}\times\mathbb{S}^{n-1})} 
 \leq C\|u_B\|_{C^\gamma(\Gamma_-)} + C (\|T_B\|_{C^{2,\gamma}(\p\Omega)}+  \|u_B\|_{C^\gamma(\Gamma_-)})^4<\infty\,.  
\end{align*}
Since
$
u|_{\Gamma_+} =\mathcal{A}_\sigma(u|_{\Gamma_-},T|_{\partial\Omega}),
$
we complete the proof.
\end{proof}


\section{The Inverse Problem}\label{sec:uniqueness}
We are now ready to show that the information of $\mathcal{A}_\sigma$ uniquely determines the coefficient $\sigma$. 
The process is divided into three steps.

For one set of boundary condition $(u_B\,,T_B)\in\mathcal{X}$, the solution $u$ and $T$ over the domain are completely preset. We then ran the following reconstruction procedure for recovering the coefficient $\sigma$:
\begin{itemize}
\item In the first step, we first view $\sigma T^4$ as the source term in the transport equation in \eqref{nonlinearRTE}, and write $u$ as an integral of $T$. For this particular set of $(u_B,T_B)$, with the measured data of $u|_{\Gamma_+}$, we have the attenuated X-ray transform of $\sigma T^4$ in every direction. We then use a reconstruction formula in Theorem~\ref{formula} to reconstruct the term $\sigma T^4$.  
\item For the remainder of the first step, since the term $\sigma T^4$ in the transport equation is now recovered, the well-posedness of the problem for the transport equation leads to the unique determination of the solution $u$. 
\item The second step is to substitute the recovered $\sigma T^4$ and $u$ into the elliptic equation in \eqref{nonlinearRTE}. One can then uniquely identify the solution $T$ due to the well-posedness of the elliptic problem.
\item In the third step, since both $\sigma T^4$ (from the first step) and $T$ (from the second step) are now known, it is straightforward to reconstruct $\sigma$ by recalling the positivity of $T$.
\end{itemize}

We note that the entire reconstruction process only requires one set of $u_B, T_B$, meaning so long as one experiment is performed, the media information is completely recovered.

The first step relies on the attenuated X-ray transform result and is the core of our proof. It is summarized in Lemma~\ref{lemma:1st}. The second step, stated in Lemma~\ref{lemma:2nd}, is mainly on solving the boundary value problem for the elliptic equation. Finally, the last step is presented in the proof of Theorem~\ref{main theorem}.

Let $(u_j,T_j)\in C^\gamma(\overline\Omega\times\mathbb{S}^{n-1})\times C^{2,\gamma}(\overline\Omega)$ be the solution to the problem for two different media $\sigma_j$, $j=1,2$:
\begin{align}\label{nonlinearRTE_adjust}
\left\{\begin{array}{ll}
\theta\cdot \nabla_x u_j +  \mu u_j= \sigma_j T_j^4&\hbox{in }\Omega\times\mathbb{S}^{n-1}\,, 	\\
\Delta_x T_j -\sigma_j  T_j^4 = -\mu\langle u_j\rangle&\hbox{in }\Omega\,,
\end{array}\right. 
\end{align} 
with boundary conditions
\[
u|_{\Gamma_-} =u_B\quad \hbox{ and }\quad  T|_{\partial\Omega} =T_B \,,
\]
where $(u_B\,,T_B)\in\mathcal{X}$.

To motivate the reconstruction process for $\sigma$, we regard $S_j = \sigma_jT^4$ as a source term. Then the transport equation reads:
\begin{equation}\label{eqn:u_j_S}
\theta\cdot \nabla_x u_j +  \mu u_j= S_j\,.
\end{equation}
This source term $S_j$ can then be recovered using the attenuated X-ray transform. More specifically, let the attenuated X-ray transform $P_\mu f$ defined to be:
\begin{equation}\label{eqn:X-ray}
P_\mu f(x,\theta) := \int_{\R} e^{-\int^\infty_0 \mu(x+s\theta+\eta\theta) d\eta} f(x+s\theta)\,ds\,, \qquad  (x,\theta)\in \R^{n}\times \mathbb{S}^{n-1}\,.
\end{equation}
It is an easy exercise to see that this is a line integral and the value for all coordinates in a line is unchanged, meaning that
\[
P_\mu f(x,\theta)=P_\mu f(x+t\theta,\theta)\,,\qquad \hbox{for all } t\in \R\,.
\]

Since $\sigma,\mu$ are compactly supported, $S_j =0$, $\mu=0$ outside $\overline\Omega$. Together with $u_1-u_2=0$ on $\Gamma_-$, we obtain
\[
\left( u_1-u_2\right)(x,\theta) = P_\mu (S_1-S_2)(x,\theta)\,,\quad (x,\theta)\in\Gamma_+\,.	
\]
The condition $u_1-u_2=0$ on $\Gamma_+$ immediately leads to $P_\mu (S_1-S_2)=0$ on $\Gamma_+$, see the proof of Lemma~\ref{lemma:1st} for details. Then the question boils down to whether this implies $S_1-S_2=0$ in the domain. To that end we need to cite the following theorem.  
\begin{theorem}[An inversion formula, Theorem~2.1 in \cite{Novikov}]\label{formula}
Suppose $\mu$ and $f$ are compactly supported and continuous functions in $\R^2$ and $\mu$ is given. Define $P_\mu f(x,\theta)$ as in~\eqref{eqn:X-ray} on $(x,\theta)\in T\mathbb{S}^1$,
where $T\mathbb{S}^{1}$ is a subset of $\R^2\times\mathbb{S}^{1}$:
\[
  T\mathbb{S}^{1} :=\{(x,\theta):\ x\in\R^2,\ \theta\in\mathbb{S}^{1},\ x\cdot\theta=0\}\,.
\]
Then $P_\mu f$ on $T\mathbb{S}^1$ uniquely determines $f$ on $\R^2$ with the following formula
\begin{align}\label{Novikov formula}
f(x) &= -{1\over 4\pi} \LC {\p\over \p x_1} - i{\p\over \p x_2}\RC\int_{\mathbb{S}^1}e^{-\int_0^\infty \mu(x-t\theta)dt} \widetilde{m}(x,\theta) P_\mu f(x, \theta) (\theta_1+i\theta_2)\,d\theta \notag\\
& =:  P_\mu^* (P_\mu f)(x)\,,
\end{align} 
where $x=(x_1,x_2)\in\R^2$, $\theta=(\theta_1,\theta_2)\in\mathbb{S}^1$, and $\widetilde{m}$ is calculated from $\mu$ only. Here the adjoint operator $P_\mu^*$ is defined by
\begin{align}\label{def_Pmu}
P_\mu^* (g)(x) := -{1\over 4\pi} \LC {\p\over \p x_1} - i{\p\over \p x_2}\RC\int_{\mathbb{S}^1}e^{-\int_0^\infty \mu(x-t\theta)dt} \widetilde{m}(x,\theta) g(x, \theta) (\theta_1+i\theta_2)\,d\theta \,.
\end{align} 
\end{theorem}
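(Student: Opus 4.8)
The plan is to reconstruct Novikov's argument through the $\bar\partial$-method in the angular variable; in the plane it is cleanest to phrase it as a Fourier expansion in the direction $\theta$. Identify $\R^2$ with $\mathbb{C}$, write $\theta=(\cos\alpha,\sin\alpha)\in\mathbb{S}^1$ and set $\lambda=e^{i\alpha}=\theta_1+i\theta_2$. The first step is to recast the data as the far field of a transport equation: if $u(\cdot,\theta)$ is the bounded solution of $\theta\cdot\nabla_x u+\mu u=f$ with $u(x+t\theta,\theta)\to 0$ as $t\to-\infty$, then $u(x+t\theta,\theta)\to P_\mu f(x,\theta)$ as $t\to+\infty$ (a short computation with the explicit solution formula, using that $\mu,f$ are compactly supported). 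I would also record the algebraic identity $\theta\cdot\nabla_x=\tfrac12(\lambda\,\partial+\lambda^{-1}\bar\partial)$, where $\partial=\partial_{x_1}-i\partial_{x_2}$ and $\bar\partial=\partial_{x_1}+i\partial_{x_2}$, which is what ties the transport direction to the spectral parameter on the unit circle.

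The second step --- the crux --- is to build from $P_\mu f$ a family $u^-(x,\lambda)$ that solves the transport equation and, for each fixed $x$, extends holomorphically in $\lambda$ to $\{|\lambda|>1\}$ with $u^-(x,\lambda)\to 0$ as $\lambda\to\infty$. This rests on an analytic factorization of the attenuation weight: by a classical Hilbert-transform identity (Natterer), the divergent beam transform $\int_0^\infty\mu(x-t\theta)\,dt$ splits through the Hilbert transform, in the variable transverse to $\theta$, of the Radon transform $R\mu$, in such a way that there is a factor $\widetilde m(x,\theta)$, computed from $\mu$ alone, for which $\lambda\mapsto e^{-\int_0^\infty\mu(x-t\theta)\,dt}\,\widetilde m(x,\theta)$ is the boundary value of a function holomorphic in $\lambda$ off the unit circle. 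One then \emph{defines} $u^-(x,\theta):=-\,e^{-\int_0^\infty\mu(x-t\theta)\,dt}\,\widetilde m(x,\theta)\,P_\mu f(x,\theta)$ and verifies, using that $P_\mu f$ is constant along the lines it integrates over together with the factorization and a Plemelj/Cauchy-transform computation, that $u^-$ has the asserted holomorphy and decay in $\lambda$ and solves $\theta\cdot\nabla_x u^-+\mu u^-=f$ wherever it is analytic.

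The third step extracts the formula by Fourier matching. Since $u^-(x,\cdot)$ is holomorphic on $\{|\lambda|>1\}$ and vanishes at $\lambda=\infty$, its Laurent/Fourier series contains only modes $\lambda^k$ with $k\le -1$: $u^-(x,\theta)=\sum_{k\le -1}u^-_k(x)\lambda^k$. Substituting this, and $f$ (which is purely the $\lambda^0$ mode, being independent of $\theta$), into $\tfrac12\lambda\,\partial u^-+\tfrac12\lambda^{-1}\bar\partial u^-+\mu u^-=f$ and reading off the coefficient of $\lambda^0$: the $\lambda^{-1}\bar\partial u^-$ and $\mu u^-$ terms contribute nothing, since they carry no $\lambda^0$ mode, leaving $f=\tfrac12\,\partial u^-_{-1}$. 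Since $u^-_{-1}(x)=\tfrac1{2\pi}\int_{\mathbb{S}^1}u^-(x,\theta)(\theta_1+i\theta_2)\,d\theta$, this gives $f(x)=\tfrac1{4\pi}(\partial_{x_1}-i\partial_{x_2})\int_{\mathbb{S}^1}u^-(x,\theta)(\theta_1+i\theta_2)\,d\theta$, and inserting the definition of $u^-$ produces exactly~\eqref{Novikov formula}, including the sign and the operator $P_\mu^*$ of~\eqref{def_Pmu}.

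I expect the only genuinely hard part to be Step~2: establishing the Hilbert-transform factorization and proving that the explicitly defined $u^-$ really is the holomorphic-in-$\lambda$ solution of the transport equation. Under the stated hypotheses ($\mu,f$ merely continuous and compactly supported) one must check that $e^{-\int_0^\infty\mu(x-t\theta)\,dt}\,\widetilde m$ has genuine one-sided holomorphic extensions in $\lambda$ with enough boundary regularity to justify Plemelj's formula and differentiation under the integral, and that the far-field decay $u^-(x,\lambda)\to 0$ as $\lambda\to\infty$ holds --- this last point is a range/consistency condition on the attenuated transform. By contrast, the transport reduction in Step~1 and the Fourier bookkeeping in Step~3 are essentially formal once the factorization is in hand.
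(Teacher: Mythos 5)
First, a point of order: the paper does not prove Theorem~\ref{formula} at all --- it is imported verbatim as Theorem~2.1 of \cite{Novikov}, and the authors explicitly decline even to write down $\widetilde m$. So there is no in-paper argument to compare against; what you have written is an attempt to reconstruct Novikov's proof. Your outline is indeed the standard route (the $\bar\partial$-method in the spectral parameter $\lambda=\theta_1+i\theta_2$, essentially in the streamlined form due to Natterer): Step~1 (the data as the far field of the transport solution) and Step~3 (extracting the $\lambda^0$ Fourier mode of $\tfrac12\lambda\,\partial u^-+\tfrac12\lambda^{-1}\bar\partial u^-+\mu u^-=f$ to get $f=\tfrac12\partial u^-_{-1}$, hence the $\tfrac{1}{4\pi}$ and the factor $\theta_1+i\theta_2$ in \eqref{Novikov formula}) are correct and consistent with the paper's normalization, including the sign.

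As a proof, however, the proposal has a genuine gap, and you have located it yourself: Step~2 is not an implementation detail, it \emph{is} the theorem. Two things are missing. (i) The factor $\widetilde m$ is never constructed. In Novikov/Natterer it is built from $h=\tfrac12(I+iH)R\mu$, the Hilbert transform (in the affine variable) of the Radon transform of $\mu$, and the analytic splitting of the beam transform $\int_0^\infty\mu(x-t\theta)\,dt$ into boundary values of functions holomorphic in $|\lambda|<1$ and in $|\lambda|>1$ is a nontrivial identity; without it nothing in Step~3 is justified. (ii) More seriously, for a generic weight the function $-e^{-\int_0^\infty\mu(x-t\theta)\,dt}\,\widetilde m(x,\theta)\,P_\mu f(x,\theta)$ on the circle has both positive and negative Fourier modes in $\lambda$, so the $u^-$ you \emph{define} by this expression need not extend holomorphically to $|\lambda|>1$, and the ``only $k\le-1$'' Laurent expansion --- the hypothesis on which all of Step~3 rests --- would fail. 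The actual argument runs in the opposite direction: one constructs sectionally holomorphic solutions $u^\pm$ on $|\lambda|\lessgtr1$ from the transport equation itself (via the Green's function of $\lambda\partial+\lambda^{-1}\bar\partial$), proves the jump relation expressing $u^+-u^-$ on the unit circle through $P_\mu f$, recovers the boundary values by the Plemelj formula, and uses the reality of $f$ (equivalently, a symmetrization over the two hemispheres) to reduce to the single coefficient $u^-_{-1}$. Defining $u^-$ by the target formula and then ``verifying'' its holomorphy and the transport equation presupposes exactly the range/holomorphy condition you flag in your last paragraph. In short: right skeleton, but the load-bearing lemma is asserted rather than proved --- which is, in fairness, consistent with the paper's own decision to cite this result rather than reprove it.
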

This theorem states that the source $f$ can be uniquely reconstructed using $P_\mu f$ as long as $\mu$ is given. In particular, if $P_\mu f=0$, then $f = 0$. The related results are also found in the literature, see for instance~\cite{ABK98, AMU18, Finch86, Finch, HMS2018, Novikov2002, ST15, SU2011, Sharafutdinov} and the reference therein. It is important to note that albeit the result here is for $n=2$, the results can be easily extended to treat higher dimensions. For $n\geq 3$, one can always reduce it to the two-dimensional case by restricting the attenuated $X$-ray transform to lines in a family of planes whose union forms the whole space, meaning the uniqueness result holds true for all $n\geq 3$.

In this paper, we do not need the explicit expression of $\widetilde{m}$ for our purpose, and thus we refer the interested reader to Theorem~2.1 in \cite{Novikov} for the detailed expression of $\widetilde{m}$.

This allows us to perform our first step. We summarize the result in the following lemma:
\begin{lemma}\label{lemma:1st}
If $\mathcal{A}_{\sigma_1}(u_B, T_B)=\mathcal{A}_{\sigma_2} (u_B, T_B)$ for $(u_B,T_B)\in \mathcal{X}$, then
\[
    \sigma_1 T^4_{1} = \sigma_2T^4_{2} \qquad \hbox{in }\Omega\,. 
\]
Moreover, one can uniquely determine the solution to the transport equation, namely,
\[
    u_1 = u_2 \qquad \hbox{in }\Omega\times\mathbb{S}^{n-1}\,.
\]
\end{lemma}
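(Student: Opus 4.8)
The plan is to reduce the claim to the inversion result for the attenuated $X$-ray transform cited in Theorem~\ref{formula}. First I would write down the explicit solution formula for the transport equation \eqref{eqn:u_j_S} with source $S_j = \sigma_j T_j^4$ and boundary data $u_B$ on $\Gamma_-$, exactly as in \eqref{expression tilde u} but with the inhomogeneous boundary term included: for $(x,\theta)\in\overline\Omega\times\mathbb{S}^{n-1}$,
\[
u_j(x,\theta) = e^{-\int_0^{\tau_-(x,\theta)}\mu(x-s\theta)\,ds}\,u_B(x_-(x,\theta),\theta) + \int_0^{\tau_-(x,\theta)} e^{-\int_0^s \mu(x-\eta\theta)\,d\eta}\,S_j(x-s\theta)\,ds\,.
\]
Subtracting the two solutions, the boundary term cancels since both share $u_B$, and evaluating at $(x,\theta)\in\Gamma_+$ (so $x_-(x,\theta)\in\partial\Omega$ is the entry point and $\tau_-(x,\theta)$ is the full chord length), the remaining integral is precisely the attenuated $X$-ray transform of $S_1 - S_2$ along the chord through $x$ in direction $\theta$. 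Here I would use that $\mu$ and $S_j$ are compactly supported in $\overline\Omega$ to extend the finite integral over the chord to the integral over all of $\R$ appearing in \eqref{eqn:X-ray}, and invoke the line-invariance $P_\mu f(x,\theta)=P_\mu f(x+t\theta,\theta)$ to identify the value on $\Gamma_+$ with the value at the canonical representative with $x\cdot\theta = 0$.

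Next I would use the hypothesis $\mathcal{A}_{\sigma_1}(u_B,T_B)=\mathcal{A}_{\sigma_2}(u_B,T_B)$, i.e.\ $u_1|_{\Gamma_+}=u_2|_{\Gamma_+}$, to conclude $P_\mu(S_1-S_2)=0$ on $\Gamma_+$, hence on all of $T\mathbb{S}^{n-1}$ by the line-invariance. Then I would apply Theorem~\ref{formula} (in dimension $2$, or the planar-slicing reduction for $n\geq 3$ described in the text) with $f = S_1 - S_2$, which is continuous and compactly supported by the regularity assumptions on $\sigma_j$, $\mu$ and the solutions; since $P_\mu f \equiv 0$, the inversion formula gives $f = P_\mu^*(P_\mu f) = 0$, that is $\sigma_1 T_1^4 = \sigma_2 T_2^4$ in $\Omega$. (Outside $\Omega$ both sides vanish by compact support.)

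For the second conclusion, once $S := S_1 = S_2$ is known, both $u_1$ and $u_2$ solve the \emph{same} linear transport boundary value problem $\theta\cdot\nabla_x u + \mu u = S$ in $\Omega\times\mathbb{S}^{n-1}$ with $u|_{\Gamma_-}=u_B$. The explicit representation above shows this problem has a unique solution in $C^\gamma(\overline\Omega\times\mathbb{S}^{n-1})$ (the right-hand side is completely determined, no fixed point is needed at this stage), so $u_1 = u_2$ in $\Omega\times\mathbb{S}^{n-1}$.

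The only genuinely delicate point is the first step — verifying carefully that the difference $u_1-u_2$ restricted to $\Gamma_+$ really equals the full-line attenuated $X$-ray transform $P_\mu(S_1-S_2)$ as defined in \eqref{eqn:X-ray}, including the bookkeeping of the attenuation weight (the weight in \eqref{eqn:X-ray} integrates $\mu$ over the forward ray $x+s\theta+\eta\theta$, $\eta>0$, while the transport solution carries $e^{-\int_0^s\mu(x-\eta\theta)d\eta}$ along the backward ray), together with the compact-support extension of the integration domain and the reduction to the canonical representative on $T\mathbb{S}^{n-1}$. Everything afterwards is a direct citation of Theorem~\ref{formula} and the uniqueness of the linear transport solve. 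The regularity check that $S_1 - S_2$ is continuous and compactly supported — so that Theorem~\ref{formula} applies — is immediate from \eqref{sigma} and the fact that $T_j\in C^{2,\gamma}(\overline\Omega)$.
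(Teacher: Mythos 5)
Your proposal is correct and follows essentially the same route as the paper: subtract the explicit transport solutions on $\Gamma_+$, identify the difference with $P_\mu(\sigma_1T_1^4-\sigma_2T_2^4)$ via the zero-extension of $\mu,\sigma_j$ and the line-invariance of $P_\mu$, invoke Theorem~\ref{formula} to get $\sigma_1T_1^4=\sigma_2T_2^4$, and then use uniqueness of the linear transport solve for $u_1=u_2$. The ``delicate point'' you flag about the attenuation-weight bookkeeping does check out (after the substitution $s\mapsto -s$ the backward-ray weight becomes the forward-ray weight in \eqref{eqn:X-ray} because $\mu$ vanishes beyond the exit point), and the only detail the paper spells out that you leave implicit is that lines missing $\Omega$ entirely also give a vanishing transform since $\sigma_j=0$ outside $\overline\Omega$.
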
 
\begin{proof}
For a fixed set of boundary condition $(u_B,T_B)\in \mathcal{X}$, there exists a unique solution $(u_j,T_j)$ to the problem \eqref{nonlinearRTE_adjust} for $j=1,2$. Then $u_1-u_2\in C^\gamma(\overline\Omega\times\mathbb{S}^{n-1})$ is the solution to the following boundary value problem:
\begin{align}\label{equation_for_u}
\left\{\begin{array}{ll}
\theta \cdot \nabla_x (u_1-u_2)+\mu (u_1-u_2) =\sigma_1 T_1^4 - \sigma_2 T_2^4 &\mbox{in $ \Omega \times \mathbb{S}^{n-1} $}\,,\\
 u_1-u_2 =0  &\hbox{on }\Gamma_-\,.
\end{array}\right.
\end{align}

Noting that since $\mathcal{A}_{\sigma_1} (u_B, T_B)=\mathcal{A}_{\sigma_2} (u_B, T_B)$, one has $u_1-u_2|_{\Gamma_{+}}=0$ as well. Thus for any point $(x,\theta)\in\Gamma_{+}$, we obtain 
\begin{align}\label{unique atenuated} 
    0= (u_1-u_2)(x,\theta)= \int^{\tau_{-}(x,\theta)}_{0} e^{-\int^{s}_0\mu(x-\eta \theta) \,d\eta} \left(\sigma_1 T^4_{1}-\sigma_2T^4_{2}\right) (x-s\theta) \,ds \,.
\end{align} 
We extend $\mu$, $\sigma_1$, and $\sigma_2$ to the whole space $\R^n$ by zero and keep the same regularity, then we can extend the integral on the right-hand side of \eqref{unique atenuated} to the whole real line. It exactly says that the attenuated X-ray transformation of $\sigma_1 T^4_{1}-\sigma_2T^4_{2}$ is zero, namely,
\[
0=P_\mu(\sigma_1 T^4_{1}-\sigma_2T^4_{2})(x,\theta).
\]
To extend this equality on $\Gamma_+$ to the whole space $\mathbb{R}^n\times\mathbb{S}^{n-1}$, one can see from the definition of the attenuated X-ray transformation that for any $(x,\theta)\in \R^n\times\mathbb{S}^{n-1}$, if there is a parameter $t$ so that $x+t\theta\in \p\Omega$ and $(x+t\theta,\theta)\in\Gamma_+$, then 
$$
P_\mu (\sigma_1 T^4_{1}-\sigma_2T^4_{2})(x+t\theta,\theta)=P_\mu (\sigma_1 T^4_{1}-\sigma_2T^4_{2})(x,\theta)= 0
$$
due to \eqref{unique atenuated}. On the other hand, if there isn't such a $t$, that means the line does not pass through $\Omega$, and since $\Omega$ is strictly convex and $\sigma_j = 0$ outside $\overline\Omega$, the line integral is also trivial. This finally leads to $P_\mu(\sigma_1 T^4_{1}-\sigma_2T^4_{2}) = 0$ for all $\mathbb{R}^n\times\mathbb{S}^{n-1}$.

Therefore, by applying Theorem~\ref{formula}, we obtain	 
$$
\sigma_1 T^4_{1}=\sigma_2T^4_{2}\qquad \hbox{in }\Omega\,,
$$
which suggests that $u_1-u_2$ satisfies the transport equation $\theta\cdot\nabla_x u+\mu u=0$ with trivial boundary conditions. Finally, by applying the well-posedness of the transport equation, we obtain $u_1=u_2$ in $\Omega\times\mathbb{S}^{n-1}$.
\end{proof}
 
The second step is rather standard.
\begin{lemma}\label{lemma:2nd}
If $\sigma_1 T^4_{1}=\sigma_2T^4_{2} $ in $\Omega$ and $
u_1 = u_2$ in $\Omega\times\mathbb{S}^{n-1}$, 
then $T_1=T_2$ in $\Omega$. 
\end{lemma}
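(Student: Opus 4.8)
The plan is to subtract the two elliptic equations in \eqref{nonlinearRTE_adjust} and exploit the two hypotheses to annihilate every lower-order term, thereby reducing the claim to the uniqueness of the Dirichlet problem for the Laplacian.

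First I would set $w := T_1 - T_2 \in C^{2,\gamma}(\overline\Omega)$. Since $(u_j,T_j)$ solves \eqref{nonlinearRTE_adjust} with the common boundary data $(u_B,T_B) \in \mathcal{X}$, we have $w|_{\partial\Omega} = T_B - T_B = 0$, while in $\Omega$
\[
\Delta_x w = \bigl(\sigma_1 T_1^4 - \mu\langle u_1\rangle\bigr) - \bigl(\sigma_2 T_2^4 - \mu\langle u_2\rangle\bigr) = \bigl(\sigma_1 T_1^4 - \sigma_2 T_2^4\bigr) - \mu\bigl(\langle u_1\rangle - \langle u_2\rangle\bigr)\,.
\]
By assumption $\sigma_1 T_1^4 = \sigma_2 T_2^4$ in $\Omega$, so the first bracket vanishes identically; and $u_1 = u_2$ in $\Omega\times\mathbb{S}^{n-1}$ gives $\langle u_1\rangle = \langle u_2\rangle$, so the second term vanishes as well. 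Hence $w$ is harmonic in $\Omega$ with zero Dirichlet data.

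It then remains to invoke uniqueness for the problem $\Delta_x w = 0$ in $\Omega$, $w = 0$ on $\partial\Omega$ — for instance the maximum principle in \cite{GTPDE}, or equivalently the solution operator $\mathcal{L}^{-1}$ from \eqref{RTE:linear} evaluated at $g = 0$ — to conclude $w \equiv 0$, i.e.\ $T_1 = T_2$ in $\Omega$. I do not expect any genuine obstacle here: the only source of difficulty in \eqref{nonlinearRTE_adjust} is the nonlinear emission term $\sigma_j T_j^4$, and that term has already been identified (and shown to agree for $j = 1,2$) in Lemma~\ref{lemma:1st}, so the present step collapses to the classical linear uniqueness statement for Poisson's equation.
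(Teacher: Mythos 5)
Your argument is correct and is essentially the paper's own proof: the paper observes that, under the two hypotheses, $T_1$ and $T_2$ solve the identical Poisson problem $\Delta_x T_j = \sigma_1 T_1^4 - \mu\langle u_1\rangle$ with the same Dirichlet data $T_B$, and concludes by uniqueness for the linear elliptic problem, which is exactly your subtraction argument phrased without introducing $w$. No issues.
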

\begin{proof}
Since $u_1=u_2$ and $\sigma_1 T^4_1=\sigma_2 T^4_2$, the solution $T_j$, $j=1,2$ satisfies the boundary value problem
\begin{align}\label{nonlinearElliptic}
\left\{\begin{array}{ll}
\Delta_x T_j = \sigma_1 T^4_1- \mu \langle u_1\rangle&\hbox{in }\Omega\,,\\
T_j =T_B & \hbox{on }\p\Omega\,.\end{array}\right.
\end{align} 
By the well-posedness of the elliptic equation, one concludes $T_1=T_2$.
\end{proof}

Now we are ready to show the final step - the unique identification of the coefficient $\sigma$.
\begin{proof}[Proof of Theorem~\ref{main theorem}]
Given a data $(u_B,T_B)\in \mathcal{X}$, from Theorem~\ref{thm:well_posedness} and Theorem~\ref{thm:positivity} (or Remark~\ref{RK:solution}), the solution $T_j$, $j=1,2$ satisfies 
$0<\alpha_2\leq T_j $, which means that $T_j$ is positive at every point in $\Omega$.
Furthermore, from Lemma~\ref{lemma:1st} and Lemma~\ref{lemma:2nd}, we have derived that 
$$
 \sigma_1 T^4_1=\sigma_2 T^4_2\quad \hbox{ and }\quad T_1=T_2\qquad  \hbox{in }\Omega\,,
$$
then we obtain
$$
    \sigma_1T_1^4 - \sigma_2 T^4_2 = (\sigma_1 -\sigma_2)T_1^4=0\,.
$$
Since $T_1>0$ in $\Omega$, it forces that $\sigma_1$ must be equal to $\sigma_2$ at every point in $\Omega$. This completes the proof of the theorem. 
\end{proof}

\begin{remark}
We emphasize that in the proofs of Lemma~\ref{lemma:1st}, Lemma~\ref{lemma:2nd} and Theorem~\ref{main theorem}, the arguments do not rely on any specifically designed incoming data. The results hold true for arbitrarily selected $(u_B,T_B)\in\mathcal{X}$. This means that one single set of data is sufficient to acquire the uniqueness result.
\end{remark}

\section{Stability estimate}\label{sec:stable}
We further study the stability of the reconstruction in this section. Since we apply the inversion formula \eqref{Novikov formula}, the stability estimate of $\sigma$ will be derived only in the two-dimensional case.

Suppose that $(u,T)$ is the solution to the problem \eqref{nonlinearRTE_adjust} with boundary data $(u_B,T_B)\in \mathcal{X}$. Then $T$ is positive away from zero in $\Omega$. Moreover, $u$ is the solution to
$$
\theta\cdot \nabla_x u +  \mu u= \sigma T^4\qquad \hbox{in }\Omega\times\mathbb{S}^{1}\, 
$$
with boundary $u=u_B$ on $\Gamma_-$. Thus, for any point $(x,\theta)\in \Gamma_+$, the solution $u$ can be explicitly written as:
\begin{align}\label{uexpress}
     u(x,\theta)&=e^{-\int_0^{\tau_-(x,\theta)}\mu(x-s\theta)ds}u_B(x-\tau_-(x,\theta)\theta,\theta)+\int^{\tau_{-}(x,\theta)}_{0} e^{-\int^{s}_0\mu(x-\eta \theta) \,d\eta} \sigma T^4(x-s\theta) \,ds\notag \\
     & = e^{-\int_0^{\tau_-(x,\theta)}\mu(x-s\theta)ds}u_B(x-\tau_-(x,\theta)\theta,\theta)+P_{\mu} (\sigma T^4)(x,\theta)\,,
\end{align}
according to the definition of $P_\mu$.
In particular, one has
\begin{align}\label{uexpress 1}
    P_{\mu} (\sigma T^4)(x,\theta)=u(x,\theta)-e^{-\int_0^{\tau_-(x,\theta)}\mu(x-s\theta)ds}u_B(x_-(x,\theta),\theta)\, 
\end{align}
for $(x,\theta)\in \Gamma_+$. 
In particular, the first term on the right-hand side is $u|_{\Gamma_+}=\mathcal{A}_{\sigma}(u_B,T_B)$ and it is known and, moreover, the second term relies on $u_B$ and thus it is also known. These imply that $P_\mu(\sigma T^4)$ on $\Gamma_+$ is now known. By a similar argument in the proof of Lemma~\ref{lemma:1st}, one can recover the full information of $P_\mu(\sigma T^4)$ on $\R^2\times \mathbb{S}^{1}$. By \eqref{Novikov formula} in Theorem~\ref{formula} and \eqref{uexpress 1}, the function $\sigma T^4$ can thus be reconstructed by the inversion formula:
\begin{align}\label{recover_sigmaT}
    \sigma T^4(x) &=P_\mu^*(P_{\mu} (\sigma T^4))(x) \notag\\
    &= P_\mu^* \LC \mathcal{A}_{\sigma}(u_B,T_B) -e^{-\int_0^{\tau_-(x,\theta)}\mu(x-s\theta)ds}u_B(x_-(x,\theta),\theta)\RC(x) \qquad  \hbox{for }x\in\Omega\,.
\end{align}
Noting that the positive solution $T$ can also be reconstructed from the knowledge of $\mathcal{A}_{\sigma}(u|_{\Gamma_-},T|_{\p\Omega})$ as seen in the proof of Theorem~\ref{thm:well_posedness}.  
The coefficient $\sigma$ can then be expressed as:
\begin{align}\label{recover_sigma}
    \sigma(x) =  T^{-4} P_\mu^*\Big( \mathcal{A}_{\sigma}(u_B,T_B) -e^{-\int_0^{\tau_-(x,\theta)} \mu(x-s\theta)ds}u_B(x_-(x,\theta),\theta)\Big)(x)\qquad \hbox{for }x\in\Omega\,,  
\end{align}
due to the linearity of $P^*_\mu$.

We are now ready to show Theorem~\ref{thm:stability}.
\begin{proof}[Proof of Theorem~\ref{thm:stability}]
From \eqref{recover_sigma}, one can derive that
\begin{align}\label{thm:stable1}
	 \|\sigma_1-\sigma_2\|_{C(\overline\Omega)}  
	 &\leq C \|T^{-4}_1 - T_2^{-4}\|_{C(\overline\Omega)}\|P_\mu^*(P_{\mu} (\sigma_1 T_1^4))\|_{C(\overline\Omega)}\notag\\
	 &\quad  + \|T_2^{-4}\|_{C(\overline\Omega)}\|P_\mu^*\LC\mathcal{A}_{\sigma_1}(u_B,T_B) -\mathcal{A}_{\sigma_2}(u_B,T_B) \RC \|_{C(\overline \Omega)}\,.
\end{align}
For the first term on the right-hand side of \eqref{thm:stable1}, by applying the boundedness of $T_j$, that is, $\|T_j\|_{C^{2,\gamma}(\overline\Omega)} \leq C(\delta_1+\delta_2)$, in Theorem~\ref{thm:well_posedness}, one has 
\begin{align}\label{T4}
	\|T^{-4}_1 - T_2^{-4}\|_{C(\overline\Omega)}
	&\leq \|T_1-T_2\|_{C(\overline\Omega)}\|(T_1^3+T_2^3+T_1T_2^2+T_1^2T_2)T^{-4}_1T^{-4}_2\|_{C(\overline\Omega)}  \notag\\
	&\leq C\alpha_2^{-8}(\delta_1+\delta_2)^3  \|T_1-T_2\|_{C(\overline\Omega)}\,.
\end{align}
		
To derive the stability of $\sigma$, it is sufficient to estimate $T_1 - T_2$. Since $T_j$, $j=1,2$, is solution to 
$$
\Delta_x T_j -\sigma_j T_j^4 = -\mu\langle u_j\rangle\,,
$$
with the same boundary $T_B$, the solution $T_j$ satisfies the estimate
\begin{align}\label{T12}
\|T_1-T_2\|_{C^{2,\gamma}(\overline\Omega)} &\leq C\|\sigma_1T^4_1-\sigma_2T^4_2\|_{C^\gamma(\overline\Omega)} + C \|\langle u_1\rangle-\langle u_2\rangle\|_{C^\gamma(\overline\Omega)} \notag\\
&\leq C \|P_\mu^*\LC\mathcal{A}_{\sigma_1}(u_B,T_B) -\mathcal{A}_{\sigma_2}(u_B,T_B) \RC \|_{C^\gamma(\overline\Omega)},
\end{align}
where we used \eqref{recover_sigmaT} and the linearity of $P_\mu^*$. Here for the last inequality of \eqref{T12}, we also used the fact that $\|\langle u_1\rangle-\langle u_2\rangle\|_{C^\gamma(\overline\Omega)}$ is controlled by $\|\sigma_1T^4_1-\sigma_2T^4_2\|_{C^\gamma(\overline\Omega)}$ due to \eqref{expression tilde u}. We then obtain 
    \begin{align*}
    \|T^{-4}_1 - T_2^{-4}\|_{C(\overline\Omega)}\leq C\alpha_2^{-8}(\delta_1+\delta_2)^3  \|P_\mu^*\LC\mathcal{A}_{\sigma_1}(u_B,T_B) -\mathcal{A}_{\sigma_2}(u_B,T_B) \RC \|_{C^\gamma(\overline\Omega)} 
    \end{align*}
    from \eqref{T4} and \eqref{T12}.
Finally, combining it together with \eqref{thm:stable1}, we have
	\begin{align*} 
	&\|\sigma_1-\sigma_2\|_{C(\overline\Omega)}\notag\\
	&\leq C\|P_\mu^*\LC\mathcal{A}_{\sigma_1}(u_B,T_B) -\mathcal{A}_{\sigma_2}(u_B,T_B) \RC \|_{C(\overline\Omega)}\notag\\
	&\quad+ C \|P_\mu^*\LC\mathcal{A}_{\sigma_1}(u_B,T_B) -\mathcal{A}_{\sigma_2}(u_B,T_B) \RC \|_{C^\gamma(\overline\Omega)} \|P_\mu^* (P_\mu(\sigma_1T^4_1))\|_{C(\overline\Omega)}\,.
	\end{align*}
By recalling that  
\[
\|P_\mu^*( P_\mu(\sigma_1T^4_1))\|_{C(\overline\Omega)}=\|\sigma_1 T_1^4\|_{C(\overline\Omega)}\leq C\sigma_M(\delta_1+\delta_2)^4\,.
\]
 
\end{proof}

\textbf{Acknowledgment.}
C. Klingenberg is supported in part by an ERASMUS$+$ mobility grant, R.-Y. Lai is partially supported by the NSF grant DMS-1714490 and Q. Li is supported in part by the NSF grant DMS-1750488. The authors thank Suman Kumar Sahoo for the discussion at the early stage of the project.

\bibliographystyle{abbrv}
\bibliography{transbib}

\begin{thebibliography}{10}

\bibitem{ABK98}
E.~V. Arbuzov, A.~L. Bukhgeim, and S.~G. Kazantsev.
\newblock Two-dimensional tomography problems and the theory of {A}-analytic
  functions.
\newblock {\em Siberian Adv. Math.}, 8:1--20, 1998.

\bibitem{Arridge_1999}
S.~R. Arridge.
\newblock Optical tomography in medical imaging.
\newblock {\em Inverse Problems}, 15(2):R41--R93, jan 1999.

\bibitem{AZ2017}
Y.~Assylbekov and T.~Zhou.
\newblock Direct and inverse problems for the nonlinear time-harmonic {M}axwell
  equations in {K}err-type media.
\newblock {\em arXiv:1709.07767, to appear in Journal of Spectral Theory},
  2017.

\bibitem{AMU18}
Y.~M. Assylbekov, F.~Monard, and G.~Uhlmann.
\newblock Inversion formulas and range characterizations for the attenuated
  geodesic ray transform.
\newblock {\em Journal de Math\'ematiques Pures et Appliqu\'ees}, 111:161--190,
  2018.

\bibitem{FO19}
A.~Feizmohammadi and L.~Oksanen.
\newblock An inverse problem for a semi-linear elliptic equation in
  {R}iemannian geometries.
\newblock {\em arXiv:1904.00608}, 2019.

\bibitem{Finch86}
D.~V. Finch.
\newblock Uniqueness for the attenuated {X}-ray transform in the physical
  range.
\newblock {\em Inverse Problems}, 2:197--203, 1986.

\bibitem{Finch}
D.~V. Finch.
\newblock The attenuated {X}-ray transform: Recent developments.
\newblock {\em Inside Out: Inverse Problems, MSRI Publications}, 47, 2003.

\bibitem{GTPDE}
D.~Gilbarg and N.~S. Trudinger.
\newblock {\em Elliptic Partial Differential Equations of Second Order}.
\newblock Springer-Verlag Berlin Heidelberg, 1983.

\bibitem{Guo2010}
Y.~Guo.
\newblock Decay and continuity of the {B}oltzmann equation in bounded domains.
\newblock {\em Arch. Rational Mech. Anal.}, 197:713--809, 2010.

\bibitem{HMS2018}
S.~Holman, F.~Monard, and P.~Stefanov.
\newblock The attenuated geodesic {X}-ray transform.
\newblock {\em Inverse Problems}, 34(6):064003, 2018.

\bibitem{Kang2002}
H.~Kang and G.~Nakamura.
\newblock Identification of nonlinearity in a conductivity equation via the
  {D}irichlet-to-{N}eumann map.
\newblock {\em Inverse Problems}, 18:1079--1088, 2002.

\bibitem{Klar}
A.~Klar.
\newblock Numerical passage from radiative heat transfer to nonlinear diffusion
  models.
\newblock {\em Mathematical Models and Methods in Applied Sciences},
  11(5):749--767, 2001.

\bibitem{KU201909}
K.~Krupchyk and G.~Uhlmann.
\newblock Partial data inverse problems for semilinear elliptic equations with
  gradient nonlinearities.
\newblock {\em arXiv:1909.08122v1}, 2019.

\bibitem{LaiL2020}
R.-Y. Lai and Y.-H. Lin.
\newblock Inverse problems for fractional semilinear elliptic equations.
\newblock {\em arXiv:2004.00549}, 2020.

\bibitem{LUY2020}
R.-Y. Lai, G.~Uhlmann, and Y.~Yang.
\newblock Reconstruction of the collision kernel in the nonlinear {B}oltzmann
  equation.
\newblock {\em arXiv:2003.09549}, 2020.

\bibitem{LLLS2019nonlinear}
M.~Lassas, T.~Liimatainen, Y.-H. Lin, and M.~Salo.
\newblock Inverse problems for elliptic equations with power type
  nonlinearities.
\newblock {\em arXiv:1903.12562}, 2019.

\bibitem{Li_2020}
Q.~Li and W.~Sun.
\newblock Applications of kinetic tools to inverse transport problems.
\newblock {\em Inverse Problems}, 36(3):035011, feb 2020.

\bibitem{minkowycz2000advances}
W.~Minkowycz.
\newblock {\em Advances in Numerical Heat Transfer}.
\newblock Number v. 2. Taylor \& Francis, 2000.

\bibitem{modest2013radiative}
M.~Modest.
\newblock {\em Radiative Heat Transfer}.
\newblock Elsevier Science, 2013.

\bibitem{Novikov}
R.~G. Novikov.
\newblock An inversion formula for the attenuated {X}-ray transformation.
\newblock {\em Ark. Mat.}, 40(1):145--167, 2002.

\bibitem{Novikov2002}
R.~G. Novikov.
\newblock On the range characterization for the two-dimensional attenuated
  {X}-ray transformation.
\newblock {\em Inverse Problems}, 18:677--700, 2002.

\bibitem{petty2006first}
G.~Petty.
\newblock {\em A first course in atmospheric radiation}.
\newblock Sundog Publishing, 2006.

\bibitem{ST15}
K.~Sadiq and A.~Tamasan.
\newblock On the range of the attenuated {R}adon transform in strictly convex
  sets.
\newblock {\em Trans. Amer. Math. Soc.}, 367(8):5375--5398, 2015.

\bibitem{SU2011}
M.~Salo and G.~Uhlmann.
\newblock The attenuated ray transform on simple surfaces.
\newblock {\em J. Differential Geometry}, 88:161--187, 2011.

\bibitem{Sharafutdinov}
V.~A. Sharafutdinov.
\newblock Integral geometry of tensor fields.
\newblock {\em VSP, Utrecht}, page MR 97h:53077, 1994.

\bibitem{review_RS_RTE}
W.~Wang, R.~Nemani, H.~Hashimoto, S.~Ganguly, D.~Huang, Y.~Knyazikhin,
  R.~Myneni, and G.~Bala.
\newblock An interplay between photons, canopy structure, and recollision
  probability: A review of the spectral invariants theory of 3d canopy
  radiative transfer processes.
\newblock {\em Remote Sensing}, 10(11), 2018.

\end{thebibliography}

\end{document}